\newtheorem{corollary}{Corollary}[section]
\newtheorem{theorem}[corollary]{Theorem}
\newtheorem{lemma}[corollary]{Lemma}
\newtheorem{proposition}[corollary]{Proposition}
\newtheorem{definition}[corollary]{Definition}
\newtheorem{question}[corollary]{Question}
\newtheorem{remark}[corollary]{Remark}
\newtheorem{example}[corollary]{Example}
\newtheorem*{theorem*}{Theorem}
\newtheorem*{corollary*}{Corollary}
\numberwithin{equation}{section}
\title{A near-optimal zero-free disk for the Ising model}
\author[V. Patel]{Viresh Patel}
\address{Viresh Patel, School of Mathematical Sciences, Queen Mary University of London, Mile End Road, London, E1 4NS}
\email{\texttt{viresh.patel@qmul.ac.uk}}
\author[G.Regts]{Guus Regts}
\address{Guus Regts, Korteweg de Vries Institute for Mathematics, University of Amsterdam. P.O. Box 94248,  
1090 GE, Amsterdam, The Netherlands}
\email{\texttt{guusregts@gmail.com}}
\author[A. Stam]{Ayla Stam}
\address{Ayla Stam, Cygnus Gymnasium. Vrolikstraat 8, 1091 VG, Amsterdam, The Netherlands}
\email{a.stam@cygnusgymnasium.nl}
\thanks{GR was funded by the Netherlands Organisation of Scientific Research (NWO): VI.Vidi.193.068}
\date{\today}
\begin{document}

\begin{abstract}
The partition function of the Ising model of a graph $G=(V,E)$ is defined as $Z_{\text{Ising}}(G;b)=\sum_{\sigma:V\to \{0,1\}} b^{m(\sigma)}$, where $m(\sigma)$ denotes the number of edges $e=\{u,v\}$ such that $\sigma(u)=\sigma(v)$. 
We show that for any positive integer $\Delta$ and any graph $G$ of maximum degree at most $\Delta$, $Z_{\text{Ising}}(G;b)\neq 0$ for all $b\in \mathbb{C}$ satisfying $|\frac{b-1}{b+1}| \leq \frac{1-o_\Delta(1)}{\Delta-1}$ (where $o_\Delta(1) \to 0$ as $\Delta\to \infty$).
This is optimal in the sense that  $\tfrac{1-o_\Delta(1)}{\Delta-1}$ cannot be replaced by $\tfrac{c}{\Delta-1}$ for any constant $c > 1$ subject to a complexity theoretic assumption.

To prove our result we use a standard reformulation of the partition function of the Ising model as the generating function of even sets. We establish a zero-free disk for this generating function inspired by techniques from statistical physics on partition functions of polymer models.
Our approach is quite general and we discuss extensions of it to certain types of polymer models.

\quad \\
{\bf Keywords:} Ising model, partition function, even set, polymer model, zeros, approximate counting.
\end{abstract}

\maketitle
\section{Introduction}
The Ising model is an important and well-studied model of ferromagnetism in statistical physics. Its partition function encodes many physical parameters of the system and from the combinatorial perspective it encodes the generating function of cuts of the underlying graph. This paper concerns the location of the zeros of the Ising partition function, which are indicative of possible phase transitions in the model.

Let $G=(V,E)$ be a graph and let $b \in \mathbb{C}$ be a complex parameter called the edge interaction. A configuration $\sigma$ of the Ising model is a $2$-colouring of the vertices, that is a function $\sigma: V \rightarrow \{0,1\}$. The weight of $\sigma$ is given by $b^{m(\sigma)}$, where $m(\sigma)$ is the number of monochromatic edges of $\sigma$, i.e. the number of edges $\{u,v\} \in E$ satisfying $\sigma(u) = \sigma(v)$. The Ising partition function of $G$ with edge interaction $b$ is then given by
\begin{equation}\label{eq:def pf}
Z_{\rm Ising}(G; b) = \sum_{\sigma: V \rightarrow \{0,1\}} b^{m(\sigma)}.
   \end{equation}

Note that taking $b=e^{2\beta}$, where $\beta$ is the inverse temperature, gives the equivalent, more standard way of writing the partition function in statistical physics.

In statistical physics one is interested in the zeros of $Z_{\rm Ising}$ (also known as Fisher zeros~\cite{fisher}) because they indicate the possible presence of phase transitions in the model; here a phase transition occurs at a value of the parameter $b$ if small deviations in $b$ cause a large qualitative changes in the behaviour of the model. 
While our interest in the present paper is in these Fisher zeros, we note that one can also equip the model with an `external field', which transforms the partition function into a $2$-variable polynomial obtained from~\eqref{eq:def pf} by multiplying each term $b^{m(\sigma)}$ by the term $\lambda^{|\sigma^{-1}(0)|}$, for an external field like parameter $\lambda$. 
Zeros in the variable $\lambda$ of the partition function are called Lee-Yang zeros and are known to lie on the unit circle in the complex plane when $b>1$~\cite{LeeYang}; see~\cite{PR20ising} for a precise description. 

More recently, there has been much interest in these zeros also in theoretical computer science (for several models including the Ising and the hard-core model) because the transition from absence to presence of zeros often represents a computational phase transition, i.e.\ a transition in the computational tractability of estimating the partition function with fast algorithms.
We will not go into the details here, but refer the reader to \cites{fisher,FriedliVelenikbook} for the connection to statistical physics and to \cites{BarBook, PR17,LSSleeyang,LeeYang, BGPRhardness,GGHisingbounded,PRsurvey} for the connection to algorithms. 
There are also connections to quantum mechanics and quantum computing where the interest is in non-real values of $b$; see~\cites{mann,mann2023algorithmic} and the references therein for more details.


Graphs of bounded maximum degree form the natural setting for studying zeros of the Ising partition function. For each positive integer $\Delta$, we are interested in regions $R$ of the complex plane for which $Z_{\rm Ising}(G;b) \not= 0$ for all graphs $G$ of maximum degree at most $\Delta$ and all $b \in R$. Thus, we are interested in establishing the existence of regions $R$ that are contained in 
\[
\mathcal{R}_{\Delta} = \{b \in \mathbb{C}: Z_{\rm Ising}(G; b) \not= 0 \text{ for all } G \text{ of maximum degree } \Delta\}.
\]

\begin{itemize}
\item Liu, Sinclair, and Srivastava~\cite{LSSfisherzeros}*{Theorem 1.2} showed that for each $\Delta$ and $\varepsilon>0$ there is a thin strip $S=S_{\Delta,\varepsilon} \subseteq \mathbb{C}$ of the complex plane containing the interval $[\frac{\Delta - 2}{\Delta}+\varepsilon, \frac{\Delta}{\Delta -2}-\varepsilon]$ such that $S \subseteq \mathcal{R}_{\Delta}$. The height of this strip is positive and decreases as $\varepsilon$ decreases. 

\item Galanis, Goldberg, and Herrera-Poyatos~\cite{GGHisingbounded} recently showed that for each $\Delta$, the disc $D(\varepsilon_\Delta)$ is contained in $\mathcal{R}_{\Delta}$, where $\varepsilon_{\Delta} := \tan(\frac{\pi}{4(\Delta - 1)})$ and where for $r>0$, $D(r)$ is the disk defined as
\[
D(r):=\{b\in \mathbb{C}:\left|\tfrac{b-1}{b+1}\right|\leq r\}.
\]
We note that for large $\Delta$, $D(\varepsilon_\Delta)$ is the disc whose diameter is approximately the interval $[\frac{\Delta - \pi/4}{\Delta + \pi/4}, \frac{\Delta + \pi/4}{\Delta - \pi/4}]$.
\item Earlier, Barvinok~\cite{BarBook} with an extension by Mann and Bremner~\cite{mann} established (for each $\Delta$) the existence of a certain disc inside $\mathcal{R}_{\Delta}$. These disks are contained in $D(\varepsilon_\Delta)$ and so we do not detail them here (see e.g.~\cite{GGHisingbounded} for a description). 
Barvinok and Barvinok~\cite{BarBarIsing}*{Theorem 1.1} established the existence of a more complicated diamond-shaped region inside $\mathcal{R}_{\Delta}$. We refer to~\cite{GGHisingbounded}*{Figure 1} for an illustration of this for $\Delta=3$. 
It should be noted that the results discussed above in fact concern more general partition functions than that of the Ising model, with the former about graph homomorphism partition functions and the latter about the multivariate Ising model with an external field.
\end{itemize}    

Our main result is to give a new, near-optimal zero-free disk for $Z_{\rm Ising}$ that is larger than the regions described above. 
\begin{theorem}
\label{thm:main}
For each positive integer $\Delta \geq 3$ we have $Z_{\rm Ising}(G, b) \not= 0$ for every graph $G$ of maximum degree at most $\Delta$ and every $b \in D(n_\Delta)$, where 
\[
n_\Delta:=\frac{\left(1-\tfrac{1}{\sqrt{2(\Delta-1)}}\right)^2}{\Delta-1} = \frac{1 - o_{\Delta}(1)}{\Delta - 1}.
\]
Moreover, for any $\varepsilon\in (0,1)$ there exists $g=g_\Delta\in \mathbb{N}$ such that if $G$ additionally has girth at least $g$, then $Z_{\rm Ising}(G, b) \not= 0$ for any $b \in D(\tfrac{1-\varepsilon}{\Delta-1})$.
\end{theorem}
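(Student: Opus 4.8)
The plan is to pass to the standard even-subgraph (high-temperature) reformulation of $Z_{\rm Ising}$ and then view the resulting polynomial as the partition function of a polymer model, to which a cluster-expansion zero-freeness criterion applies. Concretely, put $x=\tfrac{b-1}{b+1}$ and use the identity
\[
Z_{\rm Ising}(G;b)=\Bigl(\tfrac{b+1}{2}\Bigr)^{|E|}2^{|V|}\,Z_{\mathrm{even}}(G;x),\qquad Z_{\mathrm{even}}(G;x):=\sum_{\substack{F\subseteq E\\ F\ \mathrm{even}}}x^{|F|},
\]
where $F$ is \emph{even} if every vertex has even degree in $(V,F)$; one obtains this by writing $b^{\mathbf 1[\sigma(u)=\sigma(v)]}=\tfrac{b+1}{2}\bigl(1+x(-1)^{\sigma(u)+\sigma(v)}\bigr)$, expanding the product over edges and summing over $\sigma$, the inner sum vanishing unless the selected edge set is even. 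Since $n_\Delta<1$ for $\Delta\ge 3$, the disk $D(n_\Delta)$ omits $b=-1$, so $\bigl(\tfrac{b+1}{2}\bigr)^{|E|}2^{|V|}\ne 0$ there, and it suffices to prove $Z_{\mathrm{even}}(G;x)\ne 0$ whenever $|x|\le n_\Delta$ (respectively $|x|\le\tfrac{1-\varepsilon}{\Delta-1}$ in the high-girth case).

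Next I would observe that $Z_{\mathrm{even}}(G;x)$ is precisely the partition function of the polymer model whose polymers are the connected even subgraphs $\gamma$ of $G$ with at least one edge, with polymer weight $w_\gamma=x^{e(\gamma)}$, two polymers being compatible iff vertex-disjoint: taking connected components gives a weight-preserving bijection between even edge sets and finite sets of pairwise-compatible polymers. Then apply the standard polymer-model zero-freeness criterion (a consequence of the convergence of the cluster expansion): $Z_{\mathrm{even}}(G;x)\ne 0$ as soon as there is a function $a(\cdot)\ge 0$ on polymers with $\sum_{\gamma'\not\sim\gamma}|w_{\gamma'}|\,e^{a(\gamma')}\le a(\gamma)$ for every polymer $\gamma$. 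Taking $a(\gamma)=\alpha\,v(\gamma)$ for a parameter $\alpha>0$, and using that every polymer incompatible with $\gamma$ meets $V(\gamma)$, the criterion reduces to the single per-vertex estimate
\[
S_v(\alpha):=\sum_{\gamma\ni v}|x|^{e(\gamma)}e^{\alpha v(\gamma)}\ \le\ \alpha\qquad\text{for every vertex }v\in V,
\]
the sum being over connected even subgraphs $\gamma$ containing $v$.

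Everything then rests on bounding $S_v(\alpha)$. A connected even subgraph through $v$ has a closed Eulerian walk from $v$ with no two consecutive edges equal, so the number of them with exactly $m$ edges is at most $\Delta(\Delta-1)^{m-1}$; this crude bound already handles the high-girth statement, since if $G$ has girth $\ge g$ then $S_v(\alpha)\le\tfrac{\Delta}{\Delta-1}\cdot\tfrac{\beta^{\,g}}{1-\beta}$ with $\beta:=(\Delta-1)|x|e^\alpha$, and with $|x|\le\tfrac{1-\varepsilon}{\Delta-1}$ one fixes $\alpha=\alpha(\varepsilon)>0$ small enough that $\beta\le 1-\varepsilon/2$ and then chooses $g=g_\Delta$ large enough that the right-hand side falls below $\alpha$. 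For the main statement the crude bound is too lossy (it yields only radius $\Theta(1/\Delta)$ with a constant bounded away from $1$); instead one exploits that $e(\gamma)\ge v(\gamma)$, with equality exactly for cycles, so that writing $|x|^{e(\gamma)}e^{\alpha v(\gamma)}=(|x|e^\alpha)^{v(\gamma)}|x|^{e(\gamma)-v(\gamma)}$, the non-cyclic polymers carry extra factors $|x|=O(1/\Delta)$ that outweigh their larger combinatorial count, so $S_v(\alpha)$ is governed up to a $(1+o_\Delta(1))$ factor by cycles through $v$, of which there are at most $\tfrac12\Delta(\Delta-1)^{\ell-2}$ of length $\ell$. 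This gives $S_v(\alpha)\le\tfrac{1+o_\Delta(1)}{2(\Delta-1)}\cdot\tfrac{\beta^3}{1-\beta}$, and it remains to choose $\alpha$: with $|x|\le n_\Delta$ one has $\beta\le\bigl(1-\tfrac1{\sqrt{2(\Delta-1)}}\bigr)^2e^\alpha$, and taking $e^\alpha=\bigl(1-\tfrac1{\sqrt{2(\Delta-1)}}\bigr)^{-1}$ gives $\beta=1-\tfrac1{\sqrt{2(\Delta-1)}}$, $\alpha=-\ln\beta$, for which $\tfrac{1+o_\Delta(1)}{2(\Delta-1)}\cdot\tfrac{\beta^3}{1-\beta}\le\alpha$ — equivalently, $\alpha$ is chosen to minimise $s+\tfrac1{2(\Delta-1)s}$ over $s=1-\beta$, and the minimiser $s=\tfrac1{\sqrt{2(\Delta-1)}}$ produces exactly the stated value of $n_\Delta$.

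I expect the genuinely delicate point to be this sharp bound on $S_v(\alpha)$ for general graphs: one must make precise that cycles dominate — in particular controlling the proliferation of even subgraphs with many excess edges by the powers of $|x|$ they carry — pin down the leading constant $\tfrac12$ in the cycle count, and then track the lower-order corrections carefully enough that $S_v(\alpha)\le\alpha$ closes for \emph{every} $\Delta\ge 3$ with the explicit $n_\Delta$ in the statement, rather than only asymptotically (which is presumably why the constant is written as $\bigl(1-\tfrac1{\sqrt{2(\Delta-1)}}\bigr)^2$ and why the smallest values of $\Delta$ may need a direct check). The remaining ingredients — the algebraic reformulation, the polymer bijection, and the invocation of the cluster-expansion criterion — are routine.
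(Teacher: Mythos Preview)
Your approach is correct and reaches the stated $n_\Delta$, but it is genuinely different from the paper's. The paper does \emph{not} invoke the Kotecký--Preiss criterion. Instead it introduces a restricted generating function $Z_{\mathrm{even}}(G\mid U;x)$ (even sets whose components each meet $U$ in at most one vertex), proves a recursion in which adding a vertex $v$ to $U$ produces a sum over \emph{even block paths} from $v$ to $U$, and then runs a telescoping induction on $|V\setminus U|$ to control the ratio $Z_{\mathrm{even}}(G\mid U)/Z_{\mathrm{even}}(G\mid U\cup\{v\})$. Because both endpoints of a block path are already in the enlarged set, the telescoping yields an exponent $|V(B)|-2$ rather than the $|V(B)|-1$ one would get from Gruber--Kunz. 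Even block paths are then bounded by the same closed-walk count you use. The paper regards this block-path mechanism as the novelty and, in Section~4, abstracts it to any $1$-multiplicative weight; that generality is what their route buys. Your route, by contrast, is shorter and entirely standard once one has the even-subgraph reformulation: for the Ising model specifically, the two approaches land on the same walk-count inequality and hence the same $n_\Delta$.

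One correction to your write-up: the step you flag as ``genuinely delicate'' --- separating cycles from non-cycles to show the latter are lower order --- is not needed at all. Since every connected even subgraph $\gamma$ with at least one edge satisfies $v(\gamma)\le e(\gamma)$, you have $e^{\alpha v(\gamma)}\le e^{\alpha e(\gamma)}$, and the Eulerian-walk bound (each such $\gamma$ arises from at least two closed walks from $v$ using each edge once, and there are at most $\Delta(\Delta-1)^{m-2}$ such walks of length $m$) gives directly
\[
S_v(\alpha)\ \le\ \frac{\Delta}{2(\Delta-1)^2}\cdot\frac{\beta^3}{1-\beta},\qquad \beta=(\Delta-1)|x|e^{\alpha},
\]
with no $o_\Delta(1)$ fudge and no case analysis. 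With $|x|\le n_\Delta$ and $e^{\alpha}=(1-1/\sqrt{2(\Delta-1)})^{-1}$ this is at most $\alpha=-\ln(1-1/\sqrt{2(\Delta-1)})$ for every $\Delta\ge 3$ (indeed with room to spare for small $\Delta$, and tending to equality as $\Delta\to\infty$), so no separate check of small $\Delta$ is required. The high-girth statement follows from the same bound with the sum starting at $m\ge g$, exactly as you outline.
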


We make a few remarks. Comparing to previous results, our zero-free disc $D(n_\Delta)$ includes the previous zero-free disc $D(\varepsilon_\Delta)$, (for $\Delta$ large enough) and nearly contains the zero-free strip $S_{\Delta, \varepsilon}$ and the diamond-shaped region from~\cite{BarBarIsing}, but extends much further in the imaginary direction. 
Also, our result is optimal in the sense that we cannot replace $\frac{n_\Delta}{\Delta-1}$ with $\tfrac{c}{\Delta-1}$ for any $c > 1$ in Theorem~\ref{thm:main} unless \textsc{P $=$ NP}. Indeed if we could, then we would have a zero-free disc containing some real values  $0< b < \frac{\Delta -2 }{\Delta}$, and by Barvinok's interpolation method (see \cites{BarBook,PR17}), we would have an FPTAS (fully polynomial-time approximation scheme) for approximating $Z_{\rm Ising}(G,b)$ for graphs of maximum degree at most $\Delta$ and some $b < \frac{\Delta - 2}{\Delta}$; however such an algorithm does not exist unless \textsc{P $=$ NP} by work of~\cites{SlySun2spin,GSVinapprox}.

As alluded to above, a consequence of Theorem~\ref{thm:main} is that for each $b \in D(n_\Delta)$, there is an FPTAS to compute $Z_{\rm Ising}(G;b)$ for graphs of maximum degree at most $\Delta$. 
 On the other hand, it was shown by Galanis Goldberg and Herrera-Poyatos~\cite{GGHisingbounded} that it is \textsc{\#P}-hard to approximate $|Z_{\rm Ising}(G;b)|$ for graphs of maximum degree at most $\Delta$ when $b$ is a non-real algebraic number that satisfies $|\frac{b - 1}{b + 1}| \geq  1/ \sqrt{\Delta - 1}$ and $b\notin \{-i,i\}$.
Mann and Minko~\cite{mann2023algorithmic} build on this to show hardness of approximation whenever $b=e^{i \theta}$ and $|\theta|\geq \tfrac{6\pi}{5(\Delta-2)}$. (In fact, the result of~\cite{mann2023algorithmic} concerns multigraphs.)


In terms of the complement of $\mathcal{R}_{\Delta}$ i.e.\ the locations of zeros of $Z_{\rm Ising}$, our discussion above explains why computational hardness results such as~\cites{SlySun2spin,GSVinapprox, mann2023algorithmic} imply the existence of zeros under suitable complexity-theoretic assumptions such as \textsc{P $\not=$ NP} or \textsc{\#P $\not=$ FP}. In the absence of such assumptions, not much is known.
  Results from~\cite{SlySun2spin} strongly suggest (without any complexity-theoretic assumption) that for each $\Delta\geq 3$ there exists a sequence of $\Delta$-regular graphs $(G_i)$ and complex numbers $(b_i)$ such that
$Z_{\rm Ising}(G_i;b_i)=0$ and $b_i\to \tfrac{\Delta-2}{\Delta}$.\footnote{Indeed, they give a description of the free energy of the anti-ferromagnetic Ising model on the infinite $\Delta$-regular tree, which appears to be non-analytic at $b_c=\tfrac{\Delta-2}{\Delta}$, cf.~\cite{SlySun2spin}*{Figure 2(b)} (This figure is in fact for the hard-core model, but a similar picture is expected for the anti-ferromagnetic Ising model). Non-analyticity would imply that zeros of large girth $\Delta$-regular graphs accumulate at $b_c$ with the aid of some complex analysis, cf.~\cite{fisher}. It is not clear to us how to rigorously show that the free energy is indeed non-analytic at $b_c$.} 

Finally we discuss our methods.
In contrast to previous results mentioned above, the proof of Theorem~\ref{thm:main} uses the even subgraph representation of the Ising model due to Van der Waerden~\cite{vdw}. Our method is based on a novel form of the polymer method based on the block structure of subgraphs. 
The idea of possibly utilising the block structure in the context of the polymer method was initiated by Jackson and Sokal~\cite{JacksonSokal} who were aiming to prove bounds on the zeros of the chromatic polynomial. 
Here we give the first concrete application.
We believe this idea could have further applications, which we discuss at the end of the paper.

The paper is organised as follows. In the next section we give preliminaries on blocks and the even set generating function. In Section~\ref{sec:main} we prove Theorem~\ref{thm:main} via Theorem~\ref{thm:even zerofree}.  In Section~\ref{sec:extension} we generalise the main idea of our result to any partition function that behaves well on blocks, in the hope that it might find wider applications. 
This is based on the master's thesis of the third author~\cite{stam20}.
We finish with concluding remarks in Section~\ref{sec:conclusion}.
For the reader's convenience we have kept the paper self-contained.

\section{Preliminaries}
In this section we recall the connection between the even set generating function and the parition function of the Ising model and collect some graph theoretic definitions concerning blocks and so-called block paths that will be used throughout
For standard definitions in graph theory, see \cite{Diestel}.

\subsection{Even set generating function}
For a graph $G=(V,E)$ we say that $F\subseteq E$ is  \emph{even} (or an \emph{even set}) if each vertex in the spanning subgraph $(V,F)$ has even degree.
For a variable $x$ we define the \emph{even set generating function}, $Z_{\rm even}(G;x)$, of $G$ by
\begin{equation}\label{eq:def even set pf}
Z_{\rm even}(G;x):=\sum_{\substack{F\subseteq E\\ F{\rm even}}} x^{|F|}.
\end{equation}

It is well known that, after a change of variables, we can rewrite the Ising partition function as the even set generating function; this goes back to Van der Waerden~\cite{vdw}.

\begin{lemma}\label{lem:vdw}
Let $G=(V,E)$ be a graph and let $x\in\mathbb{C}\setminus\{1\}$. Then
\begin{equation}\label{eq:vdw}
{Z}_{\rm even}(G;x)=(1-x)^{|E|}2^{-|V|}Z_{\rm Ising}(G;\tfrac{1+x}{1-x}).
\end{equation}
\end{lemma}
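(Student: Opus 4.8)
The plan is to prove the identity in Lemma~\ref{lem:vdw} by the standard Van der Waerden high-temperature expansion, carried out directly on the Ising partition function. First I would substitute $b = \tfrac{1+x}{1-x}$ into the definition \eqref{eq:def pf}, so that $Z_{\rm Ising}(G;b) = \sum_{\sigma} \prod_{\{u,v\}\in E} b^{[\sigma(u)=\sigma(v)]}$. The key algebraic trick is to rewrite each edge weight in a $\pm 1$-symmetric form. Passing to spin variables $s(v) := (-1)^{\sigma(v)} \in \{-1,+1\}$, one has $b^{[\sigma(u)=\sigma(v)]} = \tfrac{1}{2}(b+1) + \tfrac{1}{2}(b-1)\, s(u)s(v)$ for each edge, since the right-hand side equals $b$ when $s(u)=s(v)$ and $1$ when $s(u)\neq s(v)$. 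Factoring out $\tfrac{b+1}{2}$ from every edge and writing $w := \tfrac{b-1}{b+1}$, this becomes $\bigl(\tfrac{b+1}{2}\bigr)^{|E|} \prod_{\{u,v\}\in E}\bigl(1 + w\, s(u)s(v)\bigr)$; note $w = x$ under our substitution, since $x = \tfrac{b-1}{b+1}$ is exactly the inverse of $b=\tfrac{1+x}{1-x}$.

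Next I would expand the product $\prod_{\{u,v\}\in E}(1 + x\,s(u)s(v))$ over all subsets $F \subseteq E$, obtaining $\sum_{F\subseteq E} x^{|F|} \prod_{\{u,v\}\in F} s(u)s(v) = \sum_{F\subseteq E} x^{|F|} \prod_{v\in V} s(v)^{d_F(v)}$, where $d_F(v)$ is the degree of $v$ in the spanning subgraph $(V,F)$. Summing over all $2^{|V|}$ spin configurations and interchanging the two sums, the inner sum $\sum_{s\in\{-1,1\}^V} \prod_v s(v)^{d_F(v)}$ factorises over vertices as $\prod_v \bigl(\sum_{s(v)\in\{-1,1\}} s(v)^{d_F(v)}\bigr)$, and each factor equals $2$ if $d_F(v)$ is even and $0$ if $d_F(v)$ is odd. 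Hence only even sets $F$ survive, each contributing $2^{|V|} x^{|F|}$, giving
\[
Z_{\rm Ising}(G;b) = \Bigl(\tfrac{b+1}{2}\Bigr)^{|E|} \, 2^{|V|} \sum_{\substack{F\subseteq E\\ F\text{ even}}} x^{|F|} = \Bigl(\tfrac{b+1}{2}\Bigr)^{|E|} 2^{|V|}\, Z_{\rm even}(G;x).
\]
Finally I would solve for $Z_{\rm even}$ and translate the prefactor back into $x$: from $b = \tfrac{1+x}{1-x}$ we get $b+1 = \tfrac{2}{1-x}$, so $\bigl(\tfrac{b+1}{2}\bigr)^{|E|} = (1-x)^{-|E|}$, and therefore $Z_{\rm even}(G;x) = (1-x)^{|E|} 2^{-|V|} Z_{\rm Ising}(G;\tfrac{1+x}{1-x})$, which is \eqref{eq:vdw}. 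The hypothesis $x \neq 1$ is exactly what is needed for the substitution $b = \tfrac{1+x}{1-x}$ to be well-defined.

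There is no real obstacle here — the argument is a routine, self-contained computation. The only points requiring a modicum of care are: verifying the per-edge identity $b^{[\sigma(u)=\sigma(v)]} = \tfrac{1}{2}(b+1)(1 + x\,s(u)s(v))$ by checking the two cases $s(u)=s(v)$ and $s(u)\neq s(v)$; justifying the interchange of the (finite) sums over $F$ and over $\sigma$; and the elementary fact that $\sum_{t\in\{-1,1\}} t^d$ equals $2$ for even $d$ (including $d=0$) and $0$ for odd $d$, which is what forces the generating function to range over even sets only. One should also note that the identity, being an identity of polynomials (equivalently rational functions) in $x$, holds for all $x\in\mathbb{C}\setminus\{1\}$ once it is verified for, say, all real $x$ in a neighbourhood of $0$, so no analytic subtleties arise.
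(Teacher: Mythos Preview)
Your proof is correct; it is the classical Van der Waerden high-temperature expansion, and every step checks out (the per-edge identity, the interchange of finite sums, the parity orthogonality $\sum_{t\in\{-1,1\}} t^{d}=2\cdot[d\text{ even}]$, and the final change of variables).

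It is, however, a genuinely different argument from the paper's. The paper proves the identity by induction on the number of edges, establishing the deletion--contraction recursions
\[
Z_{\rm even}(G;x)=xZ_{\rm even}(G/e;x)+(1-x)Z_{\rm even}(G\setminus e;x),
\qquad
Z_{\rm Ising}(G\setminus e;b)=Z_{\rm Ising}(G;b)+(1-b)Z_{\rm Ising}(G/e;b),
\]
and then checking that the right-hand side of~\eqref{eq:vdw} satisfies the same recursion as the left-hand side, with matching base case. Your route is the direct algebraic one: rewrite each edge factor as $\tfrac{b+1}{2}(1+x\,s(u)s(v))$, expand over $F\subseteq E$, and let the spin sum kill all non-even $F$. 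What your approach buys is transparency---one sees immediately \emph{why} even sets are the surviving objects---and it needs no auxiliary notion of contraction. What the paper's approach buys is a uniform treatment of multigraphs (loops and parallel edges are handled by the same recursion without having to revisit the spin identity), and it records the two recursions explicitly, which are natural companions to the polynomial. Either argument is entirely adequate for the lemma as stated.
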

For the reader's convenience we include a short combinatorial proof. We refer to~\cite{GrimmetJanson} for a short probabilistic proof.
\begin{proof}
We prove this in the more general case that $G$ is a multigraph (thus allowing multiple edges and loops) by induction on the number of edges.
In case $G$ has no edges the result is clearly true.
Next let $e$ be an edge of $G$. If $e$ is a loop, then  $Z_{\rm even}(G;x)=(1+x)Z_{\rm even}(G\setminus e ;x)$, while $Z_{\rm Ising}(G;b)=bZ_{\rm Ising}(G\setminus e ;b)$ implying by induction that~\eqref{eq:vdw} holds for $G$.
Next, if $e=\{u,v\}$ is not a loop, then we have
\begin{align}\label{eq:recursion even}
Z_{\rm even}(G;x)=xZ_{\rm even}(G/e;x)+(1-x)Z_{\rm even}(G\setminus e;x).
\end{align}
where $G/e$ denotes the (multi)graph obtained from $G$ by contracting $e$ (and retaining loops and multiple edges). To see this, note that any even set of $G$ either contains $e$ or not; even sets of the first kind correspond to even sets in $G/e$ that are not even sets in $G \setminus e$ (so contribute $x(Z_{\rm even}(G/e;x) - Z_{\rm even}(G\setminus e;x))$, while even sets of the second kind correspond to even sets of $G \setminus e$ (so contribute $Z_{\rm even}(G\setminus e;x)$).
We also have
\begin{align}
\label{eq:recursion Ising}
Z_{\rm Ising}(G\setminus e;b)=Z_{\rm Ising}(G;b)+(1-b)Z_{\rm Ising}(G/e;b),
\end{align}
since $2$-colourings $\sigma:V(G\setminus e)\to \{0,1\}$ of $G \setminus e$ in which $u,v$ receive different colours correspond to $2$-colourings of $G$ in which $u,v$ receive different colours;  $2$-colourings of $G \setminus e$ in which $u,v$ receive the same colour correspond to $2$-colourings of $G/e$, and $2$-colourings of $G$ in which $u,v$ receive the same colour correspond to $2$-colourings of $G/e$ with an extra factor $b$.
From \eqref{eq:recursion even}, \eqref{eq:recursion Ising}, and induction, we have that $(1-x)^{|E(G)|}2^{-|V(G)|}Z_{\rm Ising}(G;\tfrac{1+x}{1-x})$ equals
\begin{align*}
&(1-x)^{|E(G)|}2^{-|V(G)|}\left(Z_{\rm Ising}(G\setminus e;\tfrac{1+x}{1-x})-(1-\tfrac{1+x}{1-x})Z_{\rm Ising}(G/e;\tfrac{1+x}{1-x}) \right)
\\
=&(1-x)Z_{\rm even}(G\setminus e;x)-\tfrac{1-x}{2}(1-\tfrac{1+x}{1-x})Z_{\rm even}(G/e;x)
\\
=&(1-x)Z_{\rm even}(G\setminus e;x)+xZ_{\rm even}(G/e;x),
\end{align*}
which is equal to $Z_{\rm even}(G;x)$ by~\eqref{eq:recursion even}.
\end{proof}

\begin{remark}\label{rem:convert}
  The lemma implies that if $Z_{\rm even}(G;x)$ is nonzero for all $x$ contained in the disk of radius $r<1$ centered at $0$, then  $Z_{\rm Ising}(G;b)$ is non zero for all $b$ satisfying $\left| \frac{b-1}{b+1}\right|\leq r$, i.e. the disk with diameter given by the real interval $[\frac{1-r}{1+r}, \frac{1+r}{1-r}]$.
\end{remark}

\subsection{Blocks and block paths}
Let $G=(V,E)$ be a connected graph. For $U\subseteq V$,  we denote by $E(U)$ the collection of edges $ab \in E$ such that  $a,b \in U$.
For a set of edges $A\subseteq E$, we denote by $V(A)$ the collection of vertices that are contained in some edge in $A$.

Recall that a graph $G$ is $2$-connected if $G$ is connected and $G-v$ is connected for all $v \in V(G)$. 
We work throughout with the convention that the edge $K_2$ is a $2$-connected graph (some authors require that a $2$-connected graph should have at least $3$ vertices).
A \emph{block} of a graph $G$ is a maximal $2$-connected subgraph of $G$ (and is therefore necessarily an induced subgraph of $G$). Throughout, we identify blocks $B$ with their edge sets and write $V(B)$ for the vertices of $B$.
We mention here some standard properties of blocks; see e.g.\ \cite{Diestel}.
Let $G=(V,E)$ be a graph, let $B_1, \ldots, B_k$ be its blocks, and $v_1, \ldots, v_{\ell}$ be its cut vertices. 

\begin{itemize}
    \item[(P1)] The blocks of $G$ decompose the edges of $G$, i.e. each edge of $G$ belongs to a unique block of $G$.  Furthermore, any two blocks of $G$ either have an empty vertex intersection or they intersect in a single cut vertex of $G$.
    \item[(P2)] The block-cutpoint graph of $G$ is the graph with vertices $\{B_1, \ldots, B_k, v_1, \ldots, v_{\ell}\}$ and edges of the form $\{B_i,v_j\}$ where $v_i \in B_j$. The block-cutpoint graph of $G$ is a forest (and is a tree if $G$ is connected).
\end{itemize}
We call a block $B$ of a graph $G$ a \emph{leaf block} if it contains a single cut vertex of $G$; equivalently $B$ is a leaf block of $G$ if it corresponds to a leaf in the block-cutpoint graph of $G$.

The following fact relates even sets and blocks; it is a simple exercise but we include the proof for completeness. 
\begin{proposition}\label{lem: block path even}
Let $G=(V,E)$ be a graph. Then $F\subseteq E$ is even if and only if each block of $F$ is even.    
\end{proposition}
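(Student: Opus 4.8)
The plan is to prove the two implications separately, with essentially all the content sitting in the ``only if'' direction, which I will establish by induction on the number of blocks of the spanning subgraph $(V(F),F)$, peeling off one leaf block at a time.

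The ``if'' direction needs no induction. By property (P1) the blocks of $(V(F),F)$ partition the edge set $F$, so for every vertex $v$ we have $\deg_{(V,F)}(v)=\sum_{B}\deg_{B}(v)$, where the sum runs over the blocks $B$ of $F$ containing $v$. If each such $B$ is even then every term of the sum is even, hence $\deg_{(V,F)}(v)$ is even, and therefore $F$ is even.

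For the ``only if'' direction, suppose $F$ is even and write $H=(V(F),F)$. I will use two elementary facts: the handshake lemma (the number of odd-degree vertices of any finite graph is even), and the observation that a vertex of $H$ that is not a cut vertex lies in exactly one block $B$, where it then satisfies $\deg_{B}(v)=\deg_{H}(v)$, which is even. If $H$ has no edges there is nothing to prove. Otherwise $H$ contains a block $B_0$ with at most one cut vertex of $H$ in it: either a leaf block (corresponding to a leaf of the block--cutpoint forest, which contains exactly one cut vertex of $H$, by (P2)) or a block that is itself a connected component of $H$ (containing none). Let $c$ denote that cut vertex if it exists. Every $v\in V(B_0)\setminus\{c\}$ is a non-cut vertex of $H$ --- in the leaf-block case this uses that a vertex of a leaf block other than its cut vertex cannot lie in a second block, since that would make the leaf adjacent to two cut vertices in the block--cutpoint tree --- so $\deg_{B_0}(v)=\deg_H(v)$ is even. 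Thus all vertices of $B_0$ except possibly $c$ have even degree in $B_0$, and the handshake lemma applied to $B_0$ forces $c$ to have even degree in $B_0$ as well; hence $B_0$ is even. Now set $F'=F\setminus B_0$. Deleting the edge set of a single block creates no new adjacencies and leaves every other block $2$-connected and still maximal, so the blocks of $F'$ are exactly the blocks of $F$ other than $B_0$; moreover $F'$ is still even, since the degree of $c$ drops by the even number $\deg_{B_0}(c)$, the vertices of $V(B_0)\setminus\{c\}$ become isolated, and all remaining degrees are unchanged. As $F'$ has strictly fewer blocks than $F$, the induction hypothesis gives that every block of $F'$ --- equivalently, every block of $F$ other than $B_0$ --- is even, and together with the evenness of $B_0$ this completes the argument.

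The proof is short, and the only points requiring care, both routine, are the two structural claims used above: that a non-cut vertex of a leaf block lies in no other block (immediate from the forest structure of the block--cutpoint graph and (P2)), and that deleting the edge set of one block does not merge or otherwise disturb the remaining blocks. The paper's degenerate conventions --- $K_2$ counting as $2$-connected, and the presence of isolated vertices --- cause no difficulty, since a single-edge block is trivially even and isolated vertices have even (namely zero) degree.
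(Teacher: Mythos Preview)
Your proof is correct and follows essentially the same route as the paper's: the ``if'' direction via the edge-partition property (P1), and the ``only if'' direction by induction on the number of blocks, peeling off a leaf block and invoking the handshake lemma to force the cut vertex to have even degree in it. One small slip in your closing remark: a single-edge block $K_2$ is \emph{not} even (both endpoints have degree~$1$); the reason the convention causes no trouble is rather that your own argument shows such a block cannot occur as a leaf block of an even $F$, since its non-cut endpoint would then have odd degree in $H$.
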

\begin{proof}
By considering each component of $F$ separately, we may assume $F$ is connected.
If all blocks of $F$ are even then $F$ is even since blocks are edge-disjoint (by P1). 
The converse follows by induction on the number of blocks of $F$. If $F$ is a block then there is nothing to prove; otherwise let $B \subseteq F$ be a leaf block of $F$, which exists by (P2). Note that the blocks of $F$ consist of $B$ together with the blocks of $F-B$ and that $V(B) \cap V(F-B) = \{ v \}$ for some $v \in V$. This means all vertices of $B$ except possibly $v$ have even degree (in $B$), and so $v$ also has even degree in $B$ by the handshake lemma; hence $B$ is even. This means $F-B$ is even and by induction all its blocks are also even, proving the lemma.
\end{proof}

An important notion that will allow us to determine an effective recursion for the even set generating function in the next section is that of a \emph{block path} due to Sokal and Jackson~\cite{JacksonSokal}. 
\begin{definition}[Block path]\label{def:block path}
Let $G=(V,E)$ be a graph with $U\subset V$ and $v\in V \setminus U$. A \emph{block path from $v$ to $U$} is a subgraph $H$ of $G$ satisfying
\begin{enumerate}
\item $V(H)\cap U$ consists of a single vertex $u$;
\item $H$ is connected;
\item neither $u$ nor $v$ are cut vertices of $H$;
\item either $H$ is $2$-connected, or it contains exactly two leaf blocks $B_1$ and $B_2$ with $u\in B_1$ and $v\in B_2$.
\end{enumerate}
We denote the collection of all such block paths $H$ by $\mathcal{BP}(v,U,G)$ and we simply write $\mathcal{BP}(v,u,G)$ if $U = \{ u \}$. Again, we generally identify each block path with its edge set. 
\end{definition}

Next we give some basic properties of block paths that we will need. First, we note below that block paths are precisely the subgraphs whose block-cutpoint graphs are paths.

\begin{proposition}
\label{pr:equiv}
    With the definition of block paths above, we have that $H \in \mathcal{BP}(v,U,G)$ if and only if all of the following hold:
    \begin{itemize}
        \item[(a)] the block-cutpoint graph of $H$ is a path $P = B_1v_1B_2v_2 \cdots B_{k-1}v_{k-1}B_k$;
        \item[(b)] $V(B_1) \cap U = \{ u \}$ for some $u \in U$ and $v \in V(B_k)$; and
        \item[(c)] $V(B_i) \cap U = \emptyset$ for $i = 2, \ldots, k$ and $V(B_i) \cap \{v\} = \emptyset$ for $i = 1, \ldots, k-1$.
    \end{itemize} 
\end{proposition}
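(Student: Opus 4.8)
The plan is to prove both implications by unwinding Definition~\ref{def:block path} and matching it against conditions~(a)--(c), using only the standard block facts (P1)--(P2) above together with three elementary observations that I would record first: (i) for a connected graph $H$ with blocks $B_1,\dots,B_k$ one has $V(H)=\bigcup_i V(B_i)$ (identifying $H$ with its edge set, so that it has no isolated vertices), and a vertex is a cut vertex of $H$ iff it lies in at least two of the $B_i$; (ii) consequently every leaf of the block-cutpoint graph of $H$ is a block, and it is precisely a leaf block in the sense defined above (a cut vertex has degree at least two in the block-cutpoint graph and so is never a leaf); and (iii) a tree is a path iff it has at most two leaves. I would also note at the outset that $v\in V(H)$ — this is implicit in the phrase ``block path \emph{from $v$}'' and, in any case, is forced by condition~(4) whenever $H$ is not $2$-connected.

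For the forward direction I would argue as follows. Given $H\in\mathcal{BP}(v,U,G)$, condition~(2) makes $H$ connected, so by (P2) its block-cutpoint graph $T$ is a tree. Splitting on condition~(4): if $H$ is $2$-connected then $H$ is its own unique block and $T$ is a single vertex; otherwise $H$ has exactly two leaf blocks, so by (ii)--(iii) $T$ is a path. Either way this yields (a), and I would then orient the path $B_1 v_1 B_2 \cdots B_{k-1} v_{k-1} B_k$ so that $u\in V(B_1)$ and $v\in V(B_k)$, which is exactly what condition~(4) allows (and is automatic when $k=1$, since then $u,v\in V(H)=V(B_1)$). Now condition~(3) says $u,v$ are not cut vertices of $H$, so by (i) each lies in a single block, necessarily $B_1$ and $B_k$ respectively; this gives $u\notin V(B_i)$ for $i\geq2$ and $v\notin V(B_i)$ for $i\leq k-1$, i.e.\ the $\{v\}$-part of (c). Finally $V(H)\cap U=\{u\}$ by condition~(1), and since $V(H)=\bigcup_iV(B_i)$ with $u\in V(B_1)$, this forces $V(B_1)\cap U=\{u\}$ and $V(B_i)\cap U=\emptyset$ for $i\geq2$, which together with $v\in V(B_k)$ completes (b) and the $U$-part of (c).

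For the reverse direction I would start from (a)--(c), write the block-cutpoint path as $B_1 v_1 B_2 \cdots B_{k-1} v_{k-1} B_k$, and verify conditions~(1)--(4) in turn: (2) because a graph whose block-cutpoint graph is a (nonempty) path is connected; (1) because $V(H)\cap U=\bigcup_i(V(B_i)\cap U)=\{u\}$ by (b)--(c); (3) because (c) forces $u$ to lie only in $B_1$ and $v$ only in $B_k$, so by (i) neither is a cut vertex of $H$; and (4) by distinguishing $k=1$ (then $H=B_1$ is a single block, hence $2$-connected) from $k\geq2$ (then the path-endpoints $B_1,B_k$ are, by (ii), the only leaf blocks of $H$, and $u\in V(B_1)$, $v\in V(B_k)$ by (b)).

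I do not expect a genuine obstacle; this proposition is really a restatement of the recursive definition of a block path in terms of the shape of its block-cutpoint tree. The few places where I would be careful are: observation~(ii), that a cut vertex is never a leaf of the block-cutpoint tree (so leaves are genuinely leaf blocks); the degenerate case $k=1$, where $H$ is a single $2$-connected block and the conditions in (c) are vacuous; and keeping track of the implicit hypothesis $v\in V(H)$ that is baked into the phrase ``block path from $v$''.
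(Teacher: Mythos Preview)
Your proposal is correct and follows essentially the same route as the paper's own proof: both directions hinge on the equivalence between ``at most two leaf blocks'' and ``block-cutpoint tree is a path,'' together with the fact that a vertex is a cut vertex iff it lies in more than one block. Your write-up is simply more explicit about the auxiliary observations (i)--(iii) and the degenerate case $k=1$, but the argument is the same.
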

\begin{proof}
    Assuming (a), (b), and (c) hold, then clearly (1) and (2) hold in the definition of block paths. If $u$ is a cut vertex of $H$, it appears in $P$ and therefore appears in two of the blocks $B_1, \ldots, B_k$ contradicting (c), and similarly for $v$; hence (3) holds. If $B_1 = B_k$, i.e.\ $P$ is a single vertex, then $H$ is a block and so $2$-connected; otherwise $H$ has two leaf blocks, namely $B_1$ and $B_k$, and so satisfies (4).

    Conversely, suppose $H \subseteq G$ satisfies (1)-(4) of Definition~\ref{def:block path}. By (4), the block-cutpoint graph of $H$ must be a path, proving (a), and furthermore, by (4), $u \in V(B_1)$ and $v \in V(B_k)$. By (1) and (3) we deduce $V(B_1) \cap U = \{u\}$ so (b) holds. By (3), $u$ and $v$ are not cut vertices, so they can only appear in one block; hence (c) holds.
\end{proof}

\begin{proposition}
    Suppose $H \in \mathcal{BP}(v,u,G)$ for some vertices $u,v \in V(G)$. For any $x \in V(G)$, there is a path $Q$ in $G$ from $u$ to $v$ that contains $x$.
\end{proposition}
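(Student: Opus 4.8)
The plan is to construct $Q$ by walking along the block-cutpoint path of $H$, choosing inside each block a short subpath joining the two cut vertices at which that block attaches to its neighbours, and routing through $x$ inside the single block that contains it.

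The one genuinely non-trivial ingredient I would isolate first is the following standard fact about $2$-connected graphs: if $B$ is $2$-connected and $a,b,c\in V(B)$ (not necessarily distinct), then $B$ contains a path from $a$ to $b$ through $c$. When $a=b$ or $c\in\{a,b\}$ this is immediate, so assume $a,b,c$ distinct; then I would add a new vertex $w$ adjacent exactly to $a$ and $b$, observe that the resulting graph $B'$ is again $2$-connected, and apply the classical fact (see~\cite{Diestel}) that any two vertices of a $2$-connected graph lie on a common cycle to the pair $w,c$. Since $w$ has degree $2$ in $B'$, the cycle through $w$ and $c$ uses both edges at $w$, so deleting $w$ leaves the desired $a$--$b$ path through $c$ in $B$.

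Next I would apply Proposition~\ref{pr:equiv} to write the block-cutpoint graph of $H$ as a path $B_1v_1B_2v_2\cdots v_{k-1}B_k$ with $u\in V(B_1)$, $v\in V(B_k)$, and with $u,v$ not cut vertices of $H$. Put $a_1:=u$ and $a_i:=v_{i-1}$ for $2\le i\le k$, and $b_i:=v_i$ for $1\le i\le k-1$ and $b_k:=v$. Then $a_i,b_i\in V(B_i)$, and $a_i\ne b_i$ for every $i$: consecutive cut vertices on the path are distinct, while $u\ne v_1$ and $v\ne v_{k-1}$ because $u,v$ are not cut vertices of $H$ (and $u\ne v$ when $k=1$, since $v\in V\setminus U$). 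Given $x\in V(H)$, choose $j$ with $x\in V(B_j)$. For each $i\ne j$, use $2$-connectivity of $B_i$ to pick a path $Q_i$ from $a_i$ to $b_i$ inside $B_i$; for $i=j$, use the fact from the previous paragraph to pick a path $Q_j$ from $a_j$ to $b_j$ inside $B_j$ passing through $x$.

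Finally I would verify that the concatenation $Q:=Q_1Q_2\cdots Q_k$ is a path in $H$ from $u=a_1$ to $v=b_k$ containing $x$; the only point requiring care is that no vertex is repeated. Since the block-cutpoint graph is a path, each $v_i$ lies in exactly the two consecutive blocks $B_i,B_{i+1}$ and no others, so by (P1) we have $V(B_i)\cap V(B_j)=\emptyset$ whenever $|i-j|\ge 2$, and $V(B_i)\cap V(B_{i+1})=\{v_i\}=\{b_i\}=\{a_{i+1}\}$. Hence $Q_i$ and $Q_{i+1}$ share only the endpoint $v_i$, while non-consecutive $Q_i$'s are vertex-disjoint, so $Q$ is indeed a path with the required properties. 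I do not expect a real obstacle here: once the $2$-connected lemma of the second paragraph is in hand the construction is routine, and that lemma is classical.
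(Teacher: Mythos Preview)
Your proposal is correct and follows essentially the same route as the paper: write the block-cutpoint graph of $H$ as a path, pick a subpath inside each block joining the two attachment vertices, and in the block containing $x$ route the subpath through $x$ using $2$-connectivity, then concatenate. The paper's proof is terser---it simply asserts the $2$-connected fact and does not explicitly verify that the concatenation has no repeated vertices---so your version is a more carefully justified rendition of the same argument rather than a different one.
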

\begin{proof}
    Using the previous proposition, we know the block-cutpoint graph of $H$ is a path $P = B_1v_1B_2v_2 \cdots B_{k-1}v_{k-1}B_k$ with $u \in V(B_1)$ and $v \in V(B_k)$. Assume first that $x$ is not a cut vertex and that $x \in V(B_r)$. Set $v_0 = u$ and $v_k = v$. In each block $B_i$, we can find a path $Q_i$ from $v_{i-1}$ to $v_i$ and in the block $B_r$ we take a path $Q_r$ from $v_{r-1}$ to $v_r$ through $x$ (which is possible since $B_r$ is $2$-connected). Concatenating these paths gives the desired path $Q$. We can similarly find the path $Q$ if $x$ is a cut vertex (which includes the case when $B_r$ is a single edge).
\end{proof}
The following technical fact will be required in the proof of Lemma~\ref{lem:decomposition} below.
\begin{proposition}
\label{pr:tech}
    Suppose a graph $G$ can be written as the edge-disjoint union of graphs in two different ways: $G = H_1 \cup H_1' = H_2 \cup H_2'$. Suppose further that $H_1, H_2 \in \mathcal{BP}(v,u,G)$, where $u,v \in V(G)$. Then for some $i \in \{ 1,2 \}$, $H_i'$ contains a connected component $C$ with $|V(H_i) \cap V(C)| \geq 2$.  
\end{proposition}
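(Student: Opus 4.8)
The plan is to argue by contradiction. Suppose that for \emph{both} $i\in\{1,2\}$ every connected component $C$ of $H_i'$ satisfies $|V(H_i)\cap V(C)|\le 1$; I will deduce that $E(H_1)=E(H_2)$, contradicting the hypothesis that the two decompositions of $G$ are different.

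The conceptual core is a short cycle argument showing that, under this assumption, $E(H_i)$ is a union of blocks of $G$; equivalently, each block $B$ of $G$ satisfies $B\subseteq E(H_i)$ or $B\subseteq E(H_i')$. Indeed, suppose some block $B$ of $G$ has an edge $e\in E(H_i)$ and an edge $f\in E(H_i')$. By $2$-connectivity, $e$ and $f$ lie on a common cycle $Z$ of $B$. Let $A$ be a maximal subpath of $Z$ all of whose edges lie in $E(H_i')$; it exists (it contains $f$) and is a proper subpath of $Z$ (it avoids $e$), so its two endpoints $a,b$ are distinct. At each of $a,b$ the other incident edge of $Z$ lies in $E(H_i)$, for otherwise $A$ could be extended; hence $a,b\in V(H_i)$. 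Since $A$ is connected and contained in $H_i'$, it lies in a single component $C$ of $H_i'$, and so $\{a,b\}\subseteq V(H_i)\cap V(C)$ --- contradicting the assumption. This readily implies that the blocks of $H_i$ are exactly the blocks of $G$ contained in $E(H_i)$.

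Next I would transfer the problem to the block-cutpoint graph $T$ of $G$, a forest by (P2). By the previous step and Proposition~\ref{pr:equiv}, the block-cutpoint graph of $H_i$ is a path $P_i=B^{(i)}_1v^{(i)}_1B^{(i)}_2\cdots v^{(i)}_{k_i-1}B^{(i)}_{k_i}$ whose blocks are blocks of $G$; since consecutive blocks $B^{(i)}_j,B^{(i)}_{j+1}$ meet in $v^{(i)}_j$, that vertex is a cut vertex of $G$ by (P1), so $P_i$ is in fact a subpath of $T$. Moreover $u\in V(B^{(i)}_1)$ and $v\in V(B^{(i)}_{k_i})$, and since $u,v$ are not cut vertices of $H_i$ (condition~(3) of Definition~\ref{def:block path}) they each lie in only one block of $H_i$, so they do \emph{not} occur among the tree-vertices $v^{(i)}_1,\dots,v^{(i)}_{k_i-1}$ of $P_i$. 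Now set $\alpha:=u$ if $u$ is a cut vertex of $G$, and otherwise let $\alpha$ be the unique block of $G$ containing $u$ (which, by the previous step, equals $B^{(1)}_1=B^{(2)}_1$); define $\omega$ from $v$ in the same way. Then $\alpha,\omega$ are vertices of $T$ that depend only on $u,v,G$, and the walk $P_i^+$ obtained from $P_i$ by prepending $\alpha$ when $\alpha=u$ and appending $\omega$ when $\omega=v$ is a path in $T$ from $\alpha$ to $\omega$ --- when $\alpha=u$ because $u$ is adjacent in $T$ to $B^{(i)}_1$ yet absent from $P_i$, and likewise at the other end. Since the path between two vertices of a tree is unique, $P_1^+=P_2^+$, hence $P_1=P_2$, hence $E(H_1)=E(H_2)$: the desired contradiction.

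I expect the cycle argument in the second paragraph to be the key idea, and the main obstacle to be the bookkeeping in the last step. The difficulty is that when $u$ or $v$ is a cut vertex of $G$ the end blocks $B^{(i)}_1,B^{(i)}_{k_i}$ are not determined by $u$ and $v$ alone, so one cannot directly say that $P_1$ and $P_2$ have the same endpoints; introducing the auxiliary tree-vertices $\alpha,\omega$ and checking that $P_i^+$ really is a path --- which is precisely where condition~(3) of Definition~\ref{def:block path} enters --- is what resolves it.
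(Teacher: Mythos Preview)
Your argument is correct, but it takes a genuinely different route from the paper's. The paper proceeds directly by a short case analysis. If $V(H_1)=V(H_2)$, then any edge $e=xy\in E(H_1)\setminus E(H_2)$ lies in $H_2'$, and its endpoints $x,y\in V(H_1)=V(H_2)$ land in the same component of $H_2'$. If $V(H_1)\ne V(H_2)$, say $x\in V(H_1)\setminus V(H_2)$, the paper invokes the preceding proposition to obtain a $u$--$v$ path $Q$ in $H_1$ passing through $x$; a minimal subpath $Q'$ of $Q$ with endpoints in $V(H_2)$ and interior outside $V(H_2)$ then lies entirely in $H_2'$ and provides the desired component. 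That is the whole proof.

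Your approach instead establishes a structural lemma: under the negation of the conclusion, each $E(H_i)$ is a union of blocks of $G$, so the block-cutpoint path of $H_i$ embeds as a subpath of the block-cutpoint forest $T$ of $G$, and uniqueness of paths in trees then forces $H_1=H_2$. This costs more --- the cycle argument and the endpoint bookkeeping you identified --- but it does not depend on the path-through-any-vertex property of block paths and yields the stronger intermediate statement that the ``attachment'' hypothesis pins $H_i$ down block by block. The paper's argument is shorter and slicker precisely because it leans on that previous proposition.
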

\begin{proof}
    If $V(H_1) = V(H_2)$ then since $H_1$ and $H_2$ are different, assume without loss of generality that we have an edge $e = xy \in E(H_1) \setminus E(H_2)$. So $e \in E(H_2')$ and so $x$ and $y$ belong to the same connected component $C$ of $H_2'$. Moreover $x,y \in V(H_2) = V(H_1)$ as required.

    If $V(H_1) \not= V(H_2)$, assume without loss of generality that $x \in V(H_1) \setminus V(H_2)$. By the previous proposition, we know there is a path $Q$ in $H_1$ from $u$ to $v$ containing $x$. Since $u,v \in V(H_2)$, there is a subpath $Q'$ of $Q$ that starts and ends in $V(H_2)$ but with all internal vertices (including $x$) outside $V(H_2)$. Therefore $Q'$ is part of some component $C$ of $V(H_2')$, and $C$ intersects $V(H_2)$ in at least the two endpoints of $Q'$.
\end{proof}

\section{Proof of the main theorem}
\label{sec:main}
In this section we prove Theorem~\ref{thm:main}. 
We will require the following generalisation of the even set generating function.

Let $G=(V,E)$ be a graph. For $U\subset V$, we define $\mathcal{E}(G\mid U)$ to be the collection of even sets $F$ of $G$ such that each component of $(V,F)$ contains at most one vertex of $U$.
We define the associated generating function by
\begin{equation}\label{eq:def even set pf extended}
Z_{\text{even}}(G\mid U;x):=\sum_{F\in \mathcal{E}(G\mid U)} x^{|F|}.
\end{equation}
Observe that for any vertex $u\in V$ we have $Z_{\text{even}}(G\mid \{u\};x)=Z_{\text{even}}(G;x)$.

The following lemma establishes the main recursion we require in the induction step of our main theorem.
\begin{lemma}\label{lem:decomposition}
Let $G=(V,E)$ be a graph and let $U\subset V$.
Then for any vertex $v\in V\setminus U$
\begin{equation}
Z_{\text{even}}(G\mid U;x)=Z_{\text{even}}(G\mid U\cup\{v\};x)+\sum_{\substack{B\in \mathcal{BP}(v,U,G)\\ B \text{ even}}}x^{|B|}Z_{\text{even}}(G\mid U\cup V(B);x).   
\end{equation}
\end{lemma}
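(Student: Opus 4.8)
The plan is to classify each even set $F$ counted by $Z_{\mathrm{even}}(G\mid U;x)$ according to the component $C$ of $(V,F)$ containing $v$. If $C$ meets $U$ it does so in a unique vertex $u$, because $F\in\mathcal{E}(G\mid U)$; and $C\cap U=\varnothing$ holds precisely when $F\in\mathcal{E}(G\mid U\cup\{v\})$, so the sets with $C\cap U=\varnothing$ contribute exactly $Z_{\mathrm{even}}(G\mid U\cup\{v\};x)$. It then suffices to match the remaining family $\mathcal{E}_1:=\mathcal{E}(G\mid U)\setminus\mathcal{E}(G\mid U\cup\{v\})$ bijectively with the pairs $(B,F')$ for which $B\in\mathcal{BP}(v,U,G)$ is even and $F'\in\mathcal{E}(G\mid U\cup V(B))$, in such a way that $F$ is the edge-disjoint union $B\cup F'$; then $x^{|F|}=x^{|B|}x^{|F'|}$ and summing over these pairs produces the second term of the stated identity.

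To pass from $F\in\mathcal{E}_1$ to a pair I would argue as follows. The component $C$ is even, so by Proposition~\ref{lem: block path even} all of its blocks are even, and its block--cutpoint graph is a tree $T_C$. Take $B$ to be the union of the blocks of $C$ lying on the unique path in $T_C$ joining the node of $v$ to the node of $u$, where the ``node'' of a vertex is that vertex itself if it is a cut vertex of $C$ and is otherwise the unique block of $C$ containing it. Since $V(C)\cap U=\{u\}$ and, by the tree structure of $T_C$, each of $u$ and $v$ lies in exactly one block of $B$, Proposition~\ref{pr:equiv} shows $B\in\mathcal{BP}(v,u,C)$, hence $B\in\mathcal{BP}(v,U,G)$; moreover $B$ is even, being an edge-disjoint union of even blocks. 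Set $F':=F\setminus B$; it is even because $F$ and $B$ are.

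The structural heart of the argument, which I would record as a sublemma, is: \emph{if $B$ is a connected union of blocks of a connected graph $C$, then every component of $(V(C),E(C)\setminus B)$ contains at most one vertex of $V(B)$.} This is the usual ``a cycle lies inside one block'' argument: two distinct vertices of $V(B)$ in a common component of $(V(C),E(C)\setminus B)$, together with a connecting path inside the connected graph $B$, form a closed walk with no repeated edges that meets both $B$ and its complement; it therefore contains a cycle meeting both, hence a cycle inside a single block of $C$, which (as $B$ is a union of blocks) must lie entirely in $B$ -- contradicting that the cycle uses an edge outside $B$. Granting this, the components of $(V,F\setminus B)$ contained in $C$ are exactly the components of $(V(C),E(C)\setminus B)$, which meet $V(B)$ -- and hence $U\cup V(B)$, since $u\in V(B)$ -- at most once, whereas the components of $(V,F\setminus B)$ outside $C$ are untouched and already meet $U$ at most once and miss $V(B)$; so $F'\in\mathcal{E}(G\mid U\cup V(B))$ and $F\mapsto(B,F')$ is well defined.

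Conversely, given a pair $(B,F')$ as above, set $F:=B\cup F'$. No edge of $B$ can lie in $F'$, since its two endpoints would then be two vertices of $V(B)\subseteq U\cup V(B)$ inside one component of $(V,F')$; so the union is edge-disjoint, $F$ is even (the degrees add), and -- using once more that each component of $(V,F')$ meets $U\cup V(B)$ at most once -- one checks that every component of $(V,F)$ meets $U$ at most once and that the $v$-component contains $u$, so $F\in\mathcal{E}_1$. This map is visibly a left inverse of the previous one; the point requiring care is that it is a right inverse, i.e.\ that the block path peeled off from $B\cup F'$ is again $B$. Here I would apply Proposition~\ref{pr:tech}: writing $C$ for the $v$-component of $(V,B\cup F')$ and $B^\star$ for the block path extracted from it, both $B$ and $B^\star$ lie in $\mathcal{BP}(v,u,C)$ and give edge-disjoint decompositions of $E(C)$, so if $B\ne B^\star$ then one of the two complements contains a component meeting its block path in two vertices -- impossible for the complement of $B$ (a union of $(V,F')$-components, each meeting $U\cup V(B)$ at most once) and impossible for the complement of $B^\star$ (by the sublemma). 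Hence $B=B^\star$, the two maps are mutually inverse, and the identity follows. I expect this last step -- singling out the \emph{correct} even block path to peel off and showing it is recovered unambiguously -- to be the main obstacle, and it is precisely what Propositions~\ref{pr:equiv} and~\ref{pr:tech} are designed to supply.
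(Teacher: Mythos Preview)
Your proposal is correct and follows essentially the same route as the paper: extract the block path $B$ from the block--cutpoint tree of the $v$-component of $F$, set $F'=F\setminus B$, and invoke Proposition~\ref{pr:tech} for uniqueness. You are somewhat more thorough than the paper in that you explicitly verify the inverse map $(B,F')\mapsto B\cup F'$ lands in $\mathcal{E}(G\mid U)\setminus\mathcal{E}(G\mid U\cup\{v\})$ and you isolate as a sublemma the fact that components of $C\setminus B$ meet $V(B)$ at most once (which the paper asserts in one line via ``because the block--cutpoint graph of $C$ is a tree''), but the underlying argument is the same.
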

\begin{proof}
It suffices to prove that any $F\in \mathcal{E}(G\mid U)\setminus  \mathcal{E}(G\mid U\cup\{v\})$ can be written uniquely as an edge-disjoint union $F = B \cup F'$, where $B\in \mathcal{BP}(v,U,G)$ is even and $F'\in  \mathcal{E}(G\mid U\cup V(B))$.

To prove this, note that by definition, any $F\in \mathcal{E}(G\mid U)\setminus  \mathcal{E}(G\mid U\cup\{v\})$ must contain a (nontrivial) component $C$ that contains the vertex $v$ and a unique vertex $u$ from $U$.
The block-cutpoint graph of $C$ is a tree and contains a path $P$ between the blocks containing $v$ and $u$ respectively. By shortening the path if necessary we can further ensure that $u$ and $v$ only occur in the leaf blocks of $P$, so that taking the union of the blocks that appear in $P$ gives us a block path $B$ from $v$ to $u$ in $G$ (by Proposition~\ref{pr:equiv}).
Furthermore, $B$ is even by Proposition~\ref{lem: block path even}.
Because the block-cutpoint graph of $C$ is a tree, any component of $C\setminus B$ intersects $V(B)$ in at most one vertex and does not intersect $U \setminus V(B)$.
Any other nontrivial component $C'$ of $F$ intersects $U$ in at most one vertex by construction and does not intersect $V(B)$. 
Define $F'=F\setminus B$ and note that by Proposition~\ref{lem: block path even} each component of $F'$ is even.
This gives the desired pair $(B,F')$.

For the uniqueness, note that we cannot have a second decomposition $F = B_1 \cup F_1'$ with $B_1\in \mathcal{BP}(v,U,G)$ and $F_1'\in  \mathcal{E}(G\mid U\cup V(B_1))$ since by Proposition~\ref{pr:tech} we may assume that $F_1'$ has a component that intersects $B_1'$ in at least two vertices, contradicting $F_1'\in  \mathcal{E}(G\mid U\cup V(B_1))$.
%
\end{proof}

 For a graph $G$ and a vertex $v$ of $G$, define $\mathcal{W}_{v}(G)$ to be the collection of walks from $v$ to itself in $G$ that use each edge at most once. Let $W_{G,v}(x)$ denote the associated generating function, that is,
\[
W_{G,v}(x)=\sum_{\substack{W\in \mathcal{W}_{v}(G)}}x^{|W|},
\]
where $|W|$ denotes the number of edges in the walk $W$.
The next lemma bounds the walk generating function and is used in our proof of Theorem~\ref{thm:even zerofree} below.
\begin{lemma}\label{lem:walk bound}
Let $\Delta, g\geq 3$ be integers, let $G=(V,E)$ be a graph of maximum degree at most $\Delta$ and girth at least $g$, and let $v$ be a vertex of $G$.
Then for any $c\in [0,1)$, we have 
\[W_{G,v}(\tfrac{c}{\Delta-1}) \leq \tfrac{\Delta c^g}{(\Delta - 1)^2 (1-c)}.\]
\end{lemma}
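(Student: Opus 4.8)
The plan is to bound the walk generating function $W_{G,v}(x)$ for $x = c/(\Delta-1)$ by counting walks from $v$ to itself that use each edge at most once, grouped by length, and controlling both the number of such walks of a given length and the fact that any such closed walk must have length at least $g$ (since it contains a cycle — indeed a closed walk of positive length that repeats no edge must contain a cycle, and the girth hypothesis forces that cycle, hence the walk, to have at least $g$ edges). So only $|W| \geq g$ contributes to the sum.

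\medskip

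\textbf{Key steps.} First I would establish the crude counting bound: the number of walks of length $\ell$ starting at $v$ is at most $\Delta (\Delta-1)^{\ell - 1}$, since there are at most $\Delta$ choices for the first step and at most $\Delta - 1$ choices for each subsequent step (we never need to re-traverse the edge we just came along — though here the simpler bound "at most $\Delta$ choices at every step" would also work up to constants, the sharper $\Delta-1$ is what gives the clean final constant). Not every such walk returns to $v$ nor uses each edge at most once, so this over-counts $|\mathcal{W}_v(G) \cap \{|W| = \ell\}|$; that is fine for an upper bound. Second, I would plug this into the generating function: since the girth is at least $g$, every nontrivial $W \in \mathcal{W}_v(G)$ has $|W| \geq g$, so
\[
W_{G,v}\!\left(\tfrac{c}{\Delta-1}\right) \;\leq\; \sum_{\ell \geq g} \Delta (\Delta-1)^{\ell-1} \left(\tfrac{c}{\Delta-1}\right)^{\ell} \;=\; \tfrac{\Delta}{\Delta-1} \sum_{\ell \geq g} c^{\ell}.
\]
(Here the trivial walk of length $0$ should be handled — either it is excluded by the definition of $\mathcal{W}_v(G)$ as a walk "from $v$ to itself" of positive length, or, if included, it contributes $1$ and one must check the claimed bound still absorbs it; I would note that the $g \geq 3$, $c < 1$ regime and the intended application make the positive-length reading the natural one, and I would state this explicitly.) Third, I would sum the geometric series: $\sum_{\ell \geq g} c^\ell = c^g/(1-c)$, giving
\[
W_{G,v}\!\left(\tfrac{c}{\Delta-1}\right) \;\leq\; \tfrac{\Delta}{\Delta-1} \cdot \tfrac{c^g}{1-c} \;=\; \tfrac{\Delta c^g}{(\Delta-1)(1-c)}.
\]
This is actually a factor $\Delta-1$ weaker than the claimed $\tfrac{\Delta c^g}{(\Delta-1)^2(1-c)}$, so to recover the stated bound I would need to be more careful: the point is that a \emph{closed} walk returning to $v$ loses a factor of roughly $\Delta-1$ compared to an arbitrary walk, because the last step is forced (or nearly so) once we insist on returning to $v$ and the walk is "taut". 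Concretely, one can bound the number of closed walks of length $\ell$ at $v$ using each edge at most once by $\Delta(\Delta-1)^{\ell-2}$ rather than $\Delta(\Delta-1)^{\ell-1}$ — for instance by reserving the final return edge, or by a first-step/last-step symmetry argument showing the walk decomposes so that one degree of freedom is eliminated. With that refined count the geometric sum yields exactly $\tfrac{\Delta}{(\Delta-1)^2}\sum_{\ell\geq g} c^\ell = \tfrac{\Delta c^g}{(\Delta-1)^2(1-c)}$, as required.

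\medskip

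\textbf{Main obstacle.} The routine part is the geometric-series estimate; the genuinely delicate point, and the one I would spend the most care on, is justifying the extra factor of $\Delta - 1$ in the denominator — i.e.\ why closed walks at $v$ that use each edge at most once are a factor $\Delta-1$ rarer than the naive $\Delta(\Delta-1)^{\ell-1}$ count of all edge-simple walks from $v$. The cleanest route I can see is: delete the last edge of the walk; what remains is an edge-simple walk of length $\ell - 1$ from $v$ to a neighbour $w$ of $v$, and the last edge is then determined as $wv$ (this edge was not used earlier in the walk). The number of edge-simple walks of length $\ell-1$ from $v$ ending at a \emph{specified} neighbour is at most $(\Delta-1)^{\ell-1}$ (at most $\Delta-1$ genuine choices at the first step since we must avoid edge $vw$... — one has to set this up carefully to get the constants right, e.g.\ counting walks by their reversal, or bounding by $\Delta(\Delta-1)^{\ell-2}$ directly via "at most $\Delta$ first steps, at most $\Delta-1$ for each of the next $\ell-2$ steps, and the last step forced"). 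I would also double-check the edge-case $\ell = g$ and the boundary behaviour as $c \to 1$, and confirm that the hypothesis $g \geq 3$ (so that closed edge-simple walks of length $1$ or $2$ do not exist in a simple graph) is what makes the "$|W| \geq g$" reduction valid.
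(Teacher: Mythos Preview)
Your proposal is correct and lands on precisely the paper's argument: bound the number of length-$\ell$ closed edge-simple walks at $v$ by $\Delta(\Delta-1)^{\ell-2}$ (at most $\Delta$ choices for the first edge, at most $\Delta-1$ for each of the next $\ell-2$ edges since the just-used edge is forbidden, and the final edge is forced because the graph is simple), observe that $\ell\geq g$ by the girth hypothesis, and sum the resulting geometric series. Your caveat about the length-$0$ walk is apt; the paper's statement implicitly reads $\mathcal{W}_v(G)$ as consisting of positive-length walks, which is the reading consistent with its use.
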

\begin{proof}
The number of walks in $G$ that start and end at $v$ with $k$ edges is at most $\Delta(\Delta-1)^{k-2}$ since there are $\Delta$ choices for the first edge and $\Delta -1$ choices for each subsequent edge except the last, which is forced.  Also each such walk has at least $g$ edges. Therefore 
\[
W_{G,v;G}(\tfrac{c}{ \Delta - 1}) \leq \sum_{k \geq g}\Delta (\Delta-1)^{k-2}(\tfrac{c}{\Delta - 1})^k = \tfrac{\Delta c^g}{(\Delta - 1)^2 (1- c)}.
\]
\end{proof}

By Lemma~\ref{lem:vdw} and Remark~\ref{rem:convert} the following theorem immediately implies the `moreover' part of our main result, Theorem~\ref{thm:main}.

\begin{theorem}\label{thm:even zerofree}
Let $\varepsilon \in (0,1)$ and let $\Delta, g \geq 3$ be integers satisfying 
\begin{equation}\label{eq:require}
g+2 \geq  \frac{\log(2\varepsilon^2 (\Delta - 1)^2/\Delta)}{\log(1- \varepsilon)}.
\end{equation}
For every graph $G=(V,E)$ of maximum degree at most $\Delta$ and girth at least $g$ and every $x\in \mathbb{C}$ satisfying $|x|\leq \tfrac{(1-\varepsilon)^2}{\Delta-1}$, we have  
$Z_{\text{even}}(G;x)\neq 0$. 
\end{theorem}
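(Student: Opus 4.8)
The plan is to induct on the number of edges of $G$ (or, more conveniently, on a pair such as $(|V\setminus U|, |E|)$), using Lemma~\ref{lem:decomposition} as the driving recursion, and to carry along a stronger inductive hypothesis that controls the \emph{ratio} $Z_{\text{even}}(G\mid U\cup\{v\};x)/Z_{\text{even}}(G\mid U;x)$ rather than merely nonvanishing. Concretely, I would fix $r:=\frac{(1-\varepsilon)^2}{\Delta-1}$ and aim to prove, for all $|x|\le r$, a statement of the form: $Z_{\text{even}}(G\mid U;x)\ne 0$ and
\[
\left|\frac{Z_{\text{even}}(G\mid U\cup\{v\};x)}{Z_{\text{even}}(G\mid U;x)} - 1\right| \le \delta
\]
for a suitable constant $\delta=\delta(\varepsilon,\Delta)<1$ (one expects $\delta$ of order $\varepsilon$, coming from the walk bound in Lemma~\ref{lem:walk bound}). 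Dividing the recursion in Lemma~\ref{lem:decomposition} through by $Z_{\text{even}}(G\mid U\cup\{v\};x)$, the claim becomes that
\[
\sum_{\substack{B\in \mathcal{BP}(v,U,G)\\ B\text{ even}}} x^{|B|}\,\frac{Z_{\text{even}}(G\mid U\cup V(B);x)}{Z_{\text{even}}(G\mid U\cup\{v\};x)}
\]
is small; the terms $Z_{\text{even}}(G\mid U\cup V(B);x)$ and $Z_{\text{even}}(G\mid U\cup\{v\};x)$ differ by only a few applications of the recursion (adding the vertices of $V(B)$ one at a time), so each ratio is close to $1$ by the inductive hypothesis, and the whole sum is controlled by $\sum_{B\text{ even block path}} |x|^{|B|}$ up to a bounded multiplicative factor.

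The combinatorial heart is therefore to bound $\sum_{B\in\mathcal{BP}(v,U,G),\ B\text{ even}} |x|^{|B|}$. Here is where girth and the even-set condition enter. An even block path $B$ from $v$ to $U$ that contains at least one cycle must in fact contain a closed walk through $v$ (each leaf block containing $v$ is $2$-connected, hence has a cycle through $v$, and an even subgraph that is $2$-connected and not a single edge has $\ge g$ edges); more to the point, I would injectively (or boundedly-many-to-one) encode each even block path by a closed walk at $v$ using each edge at most once, so that
\[
\sum_{\substack{B\in\mathcal{BP}(v,U,G)\\ B\text{ even}}} |x|^{|B|} \ \le\ C\cdot W_{G,v}(|x|)
\]
for an absolute constant $C$. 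Then Lemma~\ref{lem:walk bound} with $c=(1-\varepsilon)^2/1\le 1-\varepsilon$ — wait, more precisely with $c$ such that $|x|\le c/(\Delta-1)$, i.e. $c=(1-\varepsilon)^2\le 1-\varepsilon$ — gives $W_{G,v}(|x|)\le \frac{\Delta c^g}{(\Delta-1)^2(1-c)}$, and hypothesis~\eqref{eq:require} is precisely what makes this quantity small enough (of order $\varepsilon^2$, after using $1-c\ge \varepsilon$) to close the induction with the chosen $\delta$. I would reverse-engineer $\delta$ and $C$ from~\eqref{eq:require}: the condition $g+2\ge \log(2\varepsilon^2(\Delta-1)^2/\Delta)/\log(1-\varepsilon)$ rearranges to $\frac{\Delta(1-\varepsilon)^{g+2}}{(\Delta-1)^2}\le 2\varepsilon^2\cdot(1-\varepsilon)^0$, hmm — I would track constants carefully so the final bound reads $W_{G,v}(|x|)\le$ (something like) $2\varepsilon$ or $\varepsilon/2$, whichever makes the telescoping of ratios go through.

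I expect the main obstacle to be the encoding step: showing that the generating function of \emph{even block paths} from $v$ is dominated, up to a universal constant, by the generating function of closed walks at $v$ using each edge at most once. A block path need not itself be a closed walk — it is a chain of $2$-connected blocks — so one has to argue that each block, being $2$-connected and even, decomposes into at most a bounded number of edge-disjoint cycles (or can be traversed by an Eulerian-type closed walk within the block, since an even connected graph is Eulerian), and then concatenate these block-traversals along the block-cutpoint path to form a single closed walk at $v$ of the same length $|B|$, with only boundedly many block paths mapping to the same walk. A cleaner alternative, which I would try first, is to avoid block paths that contain cycles altogether by noting that the \emph{only} even block path which is a single edge is impossible (a single edge is not even), so every even block path has $\ge g$ edges; combined with a crude count $\Delta(\Delta-1)^{k-2}$ on the number of subgraphs of a fixed size reachable from $v$ by a connected exploration, this already yields a bound of the same shape as Lemma~\ref{lem:walk bound} directly, bypassing the walk interpretation. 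Once the summation bound is in hand, the induction itself is routine: the base case $G$ edgeless (or $V=U$) is immediate since $Z_{\text{even}}\equiv 1$ there, and the inductive step is the division-by-$Z_{\text{even}}(G\mid U\cup\{v\};x)$ computation sketched above, with $|1-(\text{small sum})|>0$ giving nonvanishing and the triangle inequality giving the ratio bound. Finally, taking $U=\{u\}$ for an arbitrary vertex $u$ recovers $Z_{\text{even}}(G;x)\ne 0$ via the observation $Z_{\text{even}}(G\mid\{u\};x)=Z_{\text{even}}(G;x)$.
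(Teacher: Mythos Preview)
Your overall architecture is exactly the paper's: induct on $|V\setminus U|$, carry the ratio bound
\[
\left|\frac{Z_{\text{even}}(G\mid U;x)}{Z_{\text{even}}(G\mid U\cup\{v\};x)}-1\right|\le a
\]
(note the ratio goes this way, not the inverse you wrote), divide the recursion of Lemma~\ref{lem:decomposition} through, telescope over the vertices of $V(B)$, and then dominate the sum over even block paths by the closed-walk generating function via the Eulerian observation. The paper uses precisely your ``even connected $\Rightarrow$ Eulerian'' encoding, noting that each even block path arises from at least two closed Eulerian walks at $v$ (the two directions of traversal), which supplies the constant $C=\tfrac12$.

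There is one genuine slip. The telescoping product over $|V(B)|-1$ ratio steps is \emph{not} a ``bounded multiplicative factor'': it contributes $(1/(1-a))^{|V(B)|-2}$, which grows with the block path. This is exactly why the radius is $\tfrac{(1-\varepsilon)^2}{\Delta-1}$ rather than $\tfrac{1-\varepsilon}{\Delta-1}$. The paper sets $a=\varepsilon$, $c=1-\varepsilon$, uses that an \emph{even} block path has $|B|\ge |V(B)|$ (every block contains a cycle, so edges~$\ge$~vertices) to replace the exponent $|V(B)|-2$ by $|B|-2$, and then absorbs the growth via
\[
|x|^{|B|}\Bigl(\tfrac{1}{1-a}\Bigr)^{|B|-2}\le (1-a)^2\Bigl(\tfrac{c}{\Delta-1}\Bigr)^{|B|},
\]
so that Lemma~\ref{lem:walk bound} is applied at $c=1-\varepsilon$, not at $(1-\varepsilon)^2$. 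The final inequality needed is $(1-\varepsilon)^{g+2}\le \tfrac{2\varepsilon^2(\Delta-1)^2}{\Delta}$, which is the rearrangement of~\eqref{eq:require}. Your ``cleaner alternative'' of counting connected subgraphs by $\Delta(\Delta-1)^{k-2}$ does not work as stated (that bound is for walks, not subgraphs), so stick with the Eulerian-walk encoding.
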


\begin{proof}
We will prove the following statement which immediately implies the theorem.
Fix $a = \varepsilon$ and $c= 1- \varepsilon$ with $\varepsilon \in (0,1)$ as in the statement of the theorem.
For every graph $G=(V,E)$ of maximum degree at most $\Delta$ and girth at least $g$, every $U\subset V$, every $v\in V\setminus U$, and every $x \in \mathbb{C}$ with $|x|\leq \tfrac{c(1-a)}{\Delta - 1}$, we have
\begin{itemize}
    \item[(i)] $Z_{\text{even}}(G\mid U;x)\neq 0$, and
    \item[(ii)] \[
    \left|\frac{Z_{\text{even}}(G\mid U;x)}{Z_{\text{even}}(G\mid U\cup \{v\};x)}-1\right|\leq a. 
    \]
\end{itemize}
We note that (ii) directly implies (i) and so it suffices to show (ii). We will do this by induction on $|V\setminus U|$.
In case $V=U$ we have $Z_{\text{even}}(G\mid U;x)=1$, showing (i), while (ii) is vacuous.
Next, let us assume that $U\subset V$ and $v\in V\setminus U$.
By induction we know that $Z_{\text{even}}(G\mid U\cup \{v\};x)\neq 0$ and so by Lemma~\ref{lem:decomposition} we have
\[
\frac{Z_{\text{even}}(G\mid U;x)}{Z_{\text{even}}(G\mid U\cup \{v\};x)}-1=\sum_{\substack{B\in \mathcal{BP}(v,U,G)\\ B \text{ even}}}x^{|B|}\frac{Z_{\text{even}}(G\mid U\cup V(B);x)}{Z_{\text{even}}(G\mid U\cup \{v\};x)}.
\]

Given some $B\in \mathcal{BP}(v,U,G)$ as in the sum above, write $V(B) = \{ v, b_1, \ldots b_k,u \}$, where $\{u\}=V(B)\cap U$ and let $B_i := \{v, b_1, \ldots, b_i\}$ with $B_0 := \{v\}$.
By using (ii) inductively in the telescoping product below we obtain that 
\[
\left| \frac{Z_{\text{even}}(G\mid U\cup V(B);x)}{Z_{\text{even}}(G\mid U\cup \{v\};x)}\right| 
= \prod_{i=1}^k \left| \frac{Z_{\text{even}}(G\mid U\cup B_i;x)}{Z_{\text{even}}(G\mid U\cup B_{i-1};x)}\right| 
\leq \left(\frac{1}{1-a}\right)^{|V(B)|-2}.
\]
Note that any even block path $B$ must have at least as many edges as vertices. Therefore we can replace the exponent by $|B|-2$.
We thus obtain,
\begin{align}
\left|\frac{Z_{\text{even}}(G\mid U;x)}{Z_{\text{even}}(G\mid U\cup \{v\};x)}-1\right|\leq& (1-a)^2\sum_{\substack{B\in \mathcal{BP}(v,U,G)\\ B \text{ even}}}\left(\frac{|x|}{1-a}\right)^{|B|} \nonumber
\\
\leq &(1-a)^2\sum_{\substack{B\in \mathcal{BP}(v,U,G)\\ B \text{ even}}}\left(\frac{c}{\Delta-1}\right)^{|B|}.\label{eq:bp bound}
\end{align}
Any even block path from $v$ to $U$ can be obtained in at least two ways as a closed Eulerian walk in $G$ starting and ending at $v$.  Hence we can bound the right-hand side of~\eqref{eq:bp bound} by half the walk generating function, that is, by
\[
\tfrac{1}{2}(1-a)^2 W_{G,v;g}\left(\tfrac{c}{\Delta-1}\right) 
\leq \tfrac{1}{2}(1-a)^2 \frac{\Delta c^g}{(\Delta - 1)^2(1 - c)}
\]
where the inequality follows by the previous lemma. To complete the induction, it is enough to show that the above expression is bounded above by $a$. 
Recall that $a = 1-c = \varepsilon$; thus the inequality we require is
\[
(1- \varepsilon)^{g+2} \leq \frac{2(\Delta - 1)^2\varepsilon^2}{\Delta},    
\]
which holds provided $g+2 \geq \frac{\log(2\varepsilon^2 (\Delta - 1)^2/\Delta)}{\log(1- \varepsilon)}$, as required.
\end{proof}

We can apply the theorem to the collection of all graphs of maximum degree at most $\Delta$ to obtain the following, which proves the first part of Theorem~\ref{thm:main} by Remark~\ref{rem:convert}.
\begin{corollary}\label{cor:zero-free}
Let $\Delta\geq 3$. Then for any graph $G$ of maximum degree at most $\Delta$ and $x\in \mathbb{C}$ such that $|x|\leq (1-\tfrac{1}{\sqrt{2(\Delta - 1})})^2/(\Delta-1)$, $Z_{\rm even}(G;x)\neq 0$.
\end{corollary}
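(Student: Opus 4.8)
The plan is to derive Corollary~\ref{cor:zero-free} from Theorem~\ref{thm:even zerofree} by applying essentially the same inductive argument, but without the girth hypothesis---so the walk generating function can no longer be controlled via Lemma~\ref{lem:walk bound} with $g\geq 3$ large. Instead, I would re-run the induction of Theorem~\ref{thm:even zerofree} verbatim, using the crucial observation already present in the excerpt: any \emph{even} block path $B$ from $v$ to $U$ satisfies $|B|\geq |V(B)|$, and in fact, being even and containing a cycle structure on top of a connecting path, it has at least $g_0:=3$ edges whenever it is nonempty. More carefully, an even subgraph with at least one edge has minimum degree $2$, hence contains a cycle, so $|B|\geq 3$ always; this plays the role of ``girth $g=3$'' in the walk bound, and that is all we need.

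The key steps, in order, would be: (1) Set $a=\varepsilon$, $c=1-\varepsilon$ for a parameter $\varepsilon\in(0,1)$ to be optimised, and reproduce the induction on $|V\setminus U|$ establishing property (ii), $\bigl|\tfrac{Z_{\text{even}}(G\mid U;x)}{Z_{\text{even}}(G\mid U\cup\{v\};x)}-1\bigr|\le a$, for $|x|\le \tfrac{c(1-a)}{\Delta-1}=\tfrac{(1-\varepsilon)^2}{\Delta-1}$. (2) At the inductive step, bound the sum over even block paths exactly as in~\eqref{eq:bp bound} by $(1-a)^2\sum_{B}\bigl(\tfrac{c}{\Delta-1}\bigr)^{|B|}$, then by $\tfrac12(1-a)^2 W_{G,v}\bigl(\tfrac{c}{\Delta-1}\bigr)$, and apply Lemma~\ref{lem:walk bound} with $g=3$ to get the upper bound $\tfrac12(1-\varepsilon)^2\cdot\tfrac{\Delta c^3}{(\Delta-1)^2(1-c)}=\tfrac{\Delta(1-\varepsilon)^4}{2(\Delta-1)^2}$. (3) Require this to be $\le a=\varepsilon$, i.e. $\tfrac{\Delta(1-\varepsilon)^4}{2(\Delta-1)^2}\le \varepsilon$. (4) Choose $\varepsilon=\tfrac{1}{\sqrt{2(\Delta-1)}}$: then $\varepsilon^2=\tfrac{1}{2(\Delta-1)}$, so $2(\Delta-1)^2\varepsilon = \tfrac{2(\Delta-1)^2}{\sqrt{2(\Delta-1)}}=(\Delta-1)\sqrt{2(\Delta-1)}$, and one checks $\Delta(1-\varepsilon)^4\le \Delta \le (\Delta-1)\sqrt{2(\Delta-1)}$ for all $\Delta\ge 3$ since $\sqrt{2(\Delta-1)}\ge 2 > \tfrac{\Delta}{\Delta-1}$. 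Hence the inequality holds, giving $Z_{\text{even}}(G;x)\ne 0$ for $|x|\le \tfrac{(1-\varepsilon)^2}{\Delta-1}=\bigl(1-\tfrac{1}{\sqrt{2(\Delta-1)}}\bigr)^2/(\Delta-1)$. (5) Finally invoke Remark~\ref{rem:convert} and Lemma~\ref{lem:vdw} to transfer zero-freeness to $Z_{\rm Ising}$, completing Theorem~\ref{thm:main}.

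The main obstacle---or rather the only real point requiring care---is verifying that the parameter choice $\varepsilon=\tfrac{1}{\sqrt{2(\Delta-1)}}$ makes the inequality $\tfrac{\Delta(1-\varepsilon)^4}{2(\Delta-1)^2}\le\varepsilon$ genuinely hold for every integer $\Delta\ge 3$, and that this choice is (near-)optimal among admissible $\varepsilon$, so that the stated radius $\bigl(1-\tfrac{1}{\sqrt{2(\Delta-1)}}\bigr)^2/(\Delta-1)$ is the best the method yields. This is a one-line monotonicity estimate: dropping the favourable factor $(1-\varepsilon)^4\le 1$ reduces it to $\Delta\le 2(\Delta-1)^2\varepsilon = (\Delta-1)\sqrt{2(\Delta-1)}$, equivalently $\tfrac{\Delta^2}{(\Delta-1)^2}\le 2(\Delta-1)$, i.e. $\Delta^2\le 2(\Delta-1)^3$, which is immediate for $\Delta\ge 3$ (at $\Delta=3$: $9\le 16$). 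One should note that Lemma~\ref{lem:walk bound} is stated for $g\ge 3$ and graphs of girth at least $g$; since every graph has girth at least $3$ (or the walk bound with $g=3$ holds trivially---indeed the displayed computation $\sum_{k\ge 3}\Delta(\Delta-1)^{k-2}(\tfrac{c}{\Delta-1})^k$ requires only that every nontrivial closed walk of the relevant type has length $\ge 3$, which holds because an even subgraph with an edge contains a cycle), the lemma applies to all graphs with $g=3$, and no girth restriction on $G$ is needed.
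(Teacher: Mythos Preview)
Your approach is essentially identical to the paper's: apply Theorem~\ref{thm:even zerofree} with $g=3$ (valid for every simple graph) and $\varepsilon=\tfrac{1}{\sqrt{2(\Delta-1)}}$, then verify the resulting numerical inequality. The paper does this in two lines by simply citing the theorem and checking condition~\eqref{eq:require}; you instead re-describe the whole induction, which is fine but unnecessary.

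There is, however, an arithmetic slip in your step (2). With $a=\varepsilon$ and $c=1-\varepsilon$ one has $1-c=\varepsilon$, so
\[
\tfrac12(1-a)^2\cdot\frac{\Delta c^3}{(\Delta-1)^2(1-c)}
=\tfrac12(1-\varepsilon)^2\cdot\frac{\Delta(1-\varepsilon)^3}{(\Delta-1)^2\,\varepsilon}
=\frac{\Delta(1-\varepsilon)^5}{2(\Delta-1)^2\,\varepsilon},
\]
not $\tfrac{\Delta(1-\varepsilon)^4}{2(\Delta-1)^2}$ as you wrote (you dropped the factor $\varepsilon$ in the denominator and miscounted the exponent). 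The correct inequality to verify in step (3) is therefore
\[
\frac{\Delta(1-\varepsilon)^5}{2(\Delta-1)^2\,\varepsilon}\le\varepsilon,
\qquad\text{i.e.}\qquad
(1-\varepsilon)^5\le \frac{2\varepsilon^2(\Delta-1)^2}{\Delta}=\frac{\Delta-1}{\Delta},
\]
exactly condition~\eqref{eq:require} with $g=3$. This still holds for all $\Delta\ge3$ (indeed already $(1-\varepsilon)\le\tfrac{\Delta-1}{\Delta}$, equivalent to $\sqrt{2(\Delta-1)}\le\Delta$), so your conclusion is unaffected; but the verification you wrote in step (4) checks the wrong inequality. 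Finally, step (5) is not part of Corollary~\ref{cor:zero-free} itself, which concerns only $Z_{\rm even}$; the passage to $Z_{\rm Ising}$ via Remark~\ref{rem:convert} is how the corollary feeds into Theorem~\ref{thm:main}.
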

\begin{proof}
By the previous theorem it suffices to verify~\eqref{eq:require} with $\varepsilon=\tfrac{1}{\sqrt{2(\Delta-1)}}$ and $g=3$.
Substituting this value of $\varepsilon$ into~\eqref{eq:require} and rearranging, we see that we need 
\[
\left(1- \tfrac{1}{\sqrt{2(\Delta-1)}}\right)^5\leq \frac{\Delta-1}{\Delta},
\]
which holds if $\Delta\geq 2$ (indeed the inequality already holds without the power of $5$ on the left hand side).
\end{proof}

\section{An extension: block polynomials}
\label{sec:extension}
In this section we indicate how the method used to prove  Theorem~\ref{thm:main}
generalises to a much larger class of generating functions and discuss how this relates to the polymer method.

Let $w$ be a graph invariant taking values in $\mathbb{C}$, i.e. $w$ is a function from the collection of all graphs to the complex numbers such that it assigns the same value to isomorphic graphs.
We call $w$ \emph{multiplicative} if $w(H_1\cup H_2)=w(H_1)\cdot w(H_2)$ for any two graphs $H_1,H_2$. Here $H_1\cup H_2$ denotes the disjoint union of the graphs $H_1$ and $H_2$.
We call $w$ \emph{$1$-multiplicative} if
\begin{itemize}
\item $w(K_1)=1$, where $K_1$ denotes the graph consisting of a single vertex,
\item $w$ is multiplicative, and
\item $w(H)=w(H_1)\cdot w(H_2)$ whenever $H$ is the union of two graphs $H_1$ and $H_2$ that have exactly one vertex in common. 
 \end{itemize}
 Note that if $w$ is $1$-multiplicative, then $w(H)=\prod_{B \text{ block of }H} w(B)$ by a simple induction argument.
\begin{example}[$1$-multiplicative graph invariants]\label{ex:block}  \quad 
    \begin{itemize}
        \item[(i)] Define for $x\in \mathbb{C}$ and a graph $H$, $w(H)=x^{|E(H)|}$ if $H$ is even and $w(H)=0$ otherwise. Then $w$ is $1$-multiplicative (by making use of Proposition~\ref{lem: block path even}).
        \item[(ii)] Any evaluation of the Tutte polynomial is $1$-multiplicative. Here we take the Tutte polynomial of a graph $G=(V,E)$ as
        \[
T(G;x,y)=\sum_{F \subseteq E}(x-1)^{k(F)-k(E)}(y-1)^{|F|+k(F)-|V|},       
        \]
        where $k(F)$ denotes the number of components of a graph $(V,F)$.
        \item[(iii)]  The homomorphism density into a vertex transitive graph is $1$-multiplicative. We recall that for graphs $H=(V,E)$ and $G=([k],F)$, the homomorphism density of $H$ into $G$, denoted by $t(H,G)$, is defined as
        \[
        t(H,G)=\frac{\sum_{\phi:V\to [k]}\prod_{uv\in E} \mathbf{1}_{\{\phi(u)\phi(v)\in F\}}}{k^{|V|}}.
        \]
          \end{itemize}
\end{example}

Given a $1$-multiplicative graph invariant $w$ and a graph $G=(V,E)$, define the \emph{block polynomial} as
\begin{equation}\label{eq:def blockpol}
Z_{\text{block}}(G;w):=\sum_{H\subseteq E}w(H)=\sum_{H\subseteq E}\prod_{B\text{ block of } H}w(B).   
\end{equation}
By Example~\ref{ex:block} (i), the even set generating function is an example of a block polynomial.

As in the previous section,  for each set $U\subset V$, define
\begin{equation}\label{eq:def blockpol extend}
Z_{\text{block}}(G\mid U;w):=\sum_{\substack{H\subseteq E\setminus E(U)}}w(H),
\end{equation}
where the sum is now restricted to those subgraphs $H$ for which each nontrivial component intersects $U$ in at most one vertex.
The next lemma follows directly from the proof of Lemma~\ref{lem:decomposition}.

\begin{lemma} \label{lem:block decomposition for 1-mult}
Let $w$ be a $1$-multiplicative graph invariant,
let $G=(V,E)$ be a graph, and let $U\subset V$.
Then for any vertex $v\in V\setminus U$
\begin{equation*}
Z_{\text{block}}(G\mid U;w)=Z_{\text{block}}(G\mid U\cup\{v\};w)+\sum_{\substack{B\in \mathcal{BP}(v,U,G)}} w(B) \cdot Z_{\text{block}}(G\mid U\cup V(B);w).   
\end{equation*}
\end{lemma}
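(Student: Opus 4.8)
The plan is to follow the proof of Lemma~\ref{lem:decomposition} essentially line by line, carrying the weight $w(H)$ in place of $x^{|F|}$. Write $\mathcal{H}(G\mid U)$ for the family of subgraphs $H\subseteq E\setminus E(U)$ indexing the sum in~\eqref{eq:def blockpol extend}, i.e.\ those all of whose nontrivial components meet $U$ in at most one vertex. First I would record, exactly as in the proof of Lemma~\ref{lem:decomposition}, the characterisation $H\in\mathcal{H}(G\mid U)\setminus\mathcal{H}(G\mid U\cup\{v\})$ if and only if $H\in\mathcal{H}(G\mid U)$ and the component $C$ of $(V,H)$ containing $v$ is nontrivial and contains a (necessarily unique) vertex $u\in U$.

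Next I would establish the decomposition: every such $H$ can be written uniquely as an edge-disjoint union $H=B\cup H'$ with $B\in\mathcal{BP}(v,U,G)$ and $H'\in\mathcal{H}(G\mid U\cup V(B))$, and conversely every such pair $(B,H')$ yields an element of $\mathcal{H}(G\mid U)\setminus\mathcal{H}(G\mid U\cup\{v\})$. This is exactly the content of the proof of Lemma~\ref{lem:decomposition}: one takes $B$ to be the union of the blocks along the block-cutpoint path of $C$ joining a block containing $v$ to a block containing $u$, shortened so that $u$ and $v$ occur only in the two leaf blocks, and sets $H'=H\setminus B$; uniqueness comes from Proposition~\ref{pr:tech}, and the fact that $B$ is a genuine block path from Proposition~\ref{pr:equiv}. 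The point I would stress is that this argument is weight-oblivious — it uses only the tree structure of block-cutpoint graphs — so removing the evenness constraints present in Lemma~\ref{lem:decomposition} simply enlarges the range of the block-path sum from even block paths to all of $\mathcal{BP}(v,U,G)$.

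The only genuinely new step is to check that the decomposition is weight-multiplicative, $w(B\cup H')=w(B)\cdot w(H')$. For this I would use that $H'\in\mathcal{H}(G\mid U\cup V(B))$ forces every nontrivial component of $H'$ to meet $V(B)$ in at most one vertex, so attaching $H'$ to the connected graph $B$ merges no blocks; hence the multiset of blocks of $B\cup H'$ is the disjoint union of the blocks of $B$ and the blocks of $H'$. Since $w$ is $1$-multiplicative it satisfies $w(H)=\prod_{D\text{ block of }H}w(D)$, so the identity follows. Summing $w(B\cup H')=w(B)\,w(H')$ over all valid pairs $(B,H')$ then gives
\[
Z_{\text{block}}(G\mid U;w)-Z_{\text{block}}(G\mid U\cup\{v\};w)=\sum_{B\in\mathcal{BP}(v,U,G)}w(B)\,Z_{\text{block}}(G\mid U\cup V(B);w),
\]
which is the assertion.

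I do not expect a serious obstacle: the lemma really does follow from the proof of Lemma~\ref{lem:decomposition}. The one place needing a short new argument is the block-preservation claim that licenses the use of $1$-multiplicativity, and the one place needing care in the writeup is to point out explicitly that the earlier combinatorial bijection never used evenness, so that it transfers verbatim once the weight is tracked.
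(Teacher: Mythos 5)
Your argument is correct and is essentially the paper's: the paper proves this lemma simply by asserting it ``follows directly from the proof of Lemma~\ref{lem:decomposition}'', i.e.\ by reusing the same unique decomposition $H=B\cup H'$ with the evenness constraint dropped. Your only addition is to spell out the weight-multiplicativity step $w(B\cup H')=w(B)\,w(H')$ via the block-product formula for $1$-multiplicative invariants, which is exactly the detail the paper leaves implicit.
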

With this lemma, the proof strategy of Theorem~\ref{thm:even zerofree} gives the following result. 
\begin{theorem}\label{thm:block}
Let $w$ be a \emph{$1$-multiplicative} graph invariant and let $G$ be a graph.   
Suppose there exists $a\in (0,1)$ such that for any $U\subset V$ and $v\in V\setminus U$ it holds that
\begin{equation*}
\sum_{\substack{B\in \mathcal{BP}(v,U,G)}} |w(B)|\left(\frac{1}{1-a}\right)^{|V(B)|-2}\leq a.
\end{equation*}
Then $Z_{\text{block}}(G;w)\neq 0$.
\end{theorem}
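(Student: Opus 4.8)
The plan is to mimic the proof of Theorem~\ref{thm:even zerofree} almost verbatim, replacing the even set generating function with $Z_{\text{block}}(\cdot;w)$ and the explicit bound on the block-path sum with the hypothesis. Concretely, with $a \in (0,1)$ as in the statement, I would prove by induction on $|V \setminus U|$ the following two claims for all $U \subset V$ and all $v \in V \setminus U$:
\begin{itemize}
\item[(i)] $Z_{\text{block}}(G \mid U; w) \neq 0$;
\item[(ii)] $\left| \dfrac{Z_{\text{block}}(G \mid U; w)}{Z_{\text{block}}(G \mid U \cup \{v\}; w)} - 1 \right| \leq a$.
\end{itemize}
As before, (ii) implies (i) — since $a < 1$ the ratio is nonzero, and the denominator is nonzero by the inductive hypothesis (applied with a smaller set $V \setminus U$), hence so is the numerator. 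The base case $U = V$ gives $Z_{\text{block}}(G \mid V; w) = w(\emptyset) = 1$ (the empty subgraph is the only one with no nontrivial components meeting $V$, recalling $w(K_1) = 1$ and multiplicativity give the empty product), and (ii) is vacuous. Applying (i) with $U = \{v_0\}$ for any single vertex $v_0$ yields $Z_{\text{block}}(G; w) = Z_{\text{block}}(G \mid \{v_0\}; w) \neq 0$, which is the theorem.

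For the inductive step, fix $U \subset V$ and $v \in V \setminus U$. By the inductive hypothesis, $Z_{\text{block}}(G \mid U \cup \{v\}; w) \neq 0$, so Lemma~\ref{lem:block decomposition for 1-mult} gives
\[
\frac{Z_{\text{block}}(G \mid U; w)}{Z_{\text{block}}(G \mid U \cup \{v\}; w)} - 1 = \sum_{B \in \mathcal{BP}(v,U,G)} w(B) \cdot \frac{Z_{\text{block}}(G \mid U \cup V(B); w)}{Z_{\text{block}}(G \mid U \cup \{v\}; w)}.
\]
For each block path $B$ with $V(B) = \{v, b_1, \ldots, b_k, u\}$ (where $\{u\} = V(B) \cap U$), write $B_i = \{v, b_1, \ldots, b_i\}$, $B_0 = \{v\}$, so that $U \cup B_k = U \cup V(B)$ and $U \cup B_0 = U \cup \{v\}$. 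Each of the sets $U \cup B_{i-1}$, $U \cup B_i$ differs by exactly one vertex, and (by the inductive hypothesis, since $|V \setminus (U \cup B_i)| < |V \setminus U|$) claim (ii) bounds each factor of the telescoping product
\[
\left| \frac{Z_{\text{block}}(G \mid U \cup V(B); w)}{Z_{\text{block}}(G \mid U \cup \{v\}; w)} \right| = \prod_{i=1}^{k} \left| \frac{Z_{\text{block}}(G \mid U \cup B_i; w)}{Z_{\text{block}}(G \mid U \cup B_{i-1}; w)} \right| \leq \left( \frac{1}{1-a} \right)^{k} = \left( \frac{1}{1-a} \right)^{|V(B)| - 2}.
\]
Substituting this into the sum and taking absolute values,
\[
\left| \frac{Z_{\text{block}}(G \mid U; w)}{Z_{\text{block}}(G \mid U \cup \{v\}; w)} - 1 \right| \leq \sum_{B \in \mathcal{BP}(v,U,G)} |w(B)| \left( \frac{1}{1-a} \right)^{|V(B)| - 2} \leq a,
\]
where the last inequality is precisely the hypothesis of the theorem. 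This closes the induction and proves (ii), hence (i), hence the theorem.

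I do not expect any serious obstacle: the argument is a direct abstraction of the proof of Theorem~\ref{thm:even zerofree}, with the role of the concrete block-path/walk estimate (Lemmas~\ref{lem:walk bound} and the computation in~\eqref{eq:bp bound}) now simply assumed. The only points requiring a little care are (a) checking that the inductive hypothesis is legitimately applied to the sets $U \cup B_i$ (they all strictly contain $U$ when the block path is nontrivial, so $|V \setminus (U \cup B_i)| < |V \setminus U|$; and when $B$ has a single edge, i.e. $k = 0$, the telescoping product is empty and the bound $(1-a)^0 = 1$ trivially holds), and (b) confirming the base case identifies $Z_{\text{block}}(G \mid V; w) = 1$ correctly from $1$-multiplicativity. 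One should also note that the telescoping step does not need $w$ to be supported on even subgraphs or any girth hypothesis — only $1$-multiplicativity (used in Lemma~\ref{lem:block decomposition for 1-mult}) and the assumed summability bound. This makes the statement genuinely more general than Theorem~\ref{thm:even zerofree}, which is recovered by taking $w$ as in Example~\ref{ex:block}(i) and verifying the hypothesis via Lemma~\ref{lem:walk bound} exactly as in the proof of Theorem~\ref{thm:even zerofree}.
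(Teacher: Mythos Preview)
Your proposal is correct and follows essentially the same approach as the paper's own proof sketch: induction on $|V\setminus U|$ to establish (i) and (ii), using Lemma~\ref{lem:block decomposition for 1-mult} for the recursion and the telescoping product bounded via the inductive hypothesis, with the final inequality supplied directly by the assumed summability condition. Your write-up is in fact slightly more careful than the paper's sketch in spelling out the base case, the recovery of $Z_{\text{block}}(G;w)$ from $Z_{\text{block}}(G\mid\{v_0\};w)$, and the degenerate case $k=0$.
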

For convenience of the reader we include a short proof sketch referring to the proof of Theorem~\ref{thm:even zerofree} for some of the steps.
\begin{proof}
The idea is to prove the following statement, which immediately implies the theorem.
For every graph $G=(V,E)$ of maximum degree at most $\Delta$, every $U\subset V$, every $v\in V\setminus U$, we have
\begin{itemize}
    \item[(i)] $Z_{\text{block}}(G\mid U;w)\neq 0$, and
    \item[(ii)] \[
    \left|\frac{Z_{\text{block}}(G\mid U;w)}{Z_{\text{block}}(G\mid U\cup \{v\};w)}-1\right|\leq a. 
    \]
\end{itemize}
We note that (ii) directly implies (i) and so it suffices to show (ii). We will do this by induction on $|V\setminus U|$.
By Lemma~\ref{lem:block decomposition for 1-mult} and induction we have for any vertex $v\notin U$, 
\[
\frac{Z_{\text{block}}(G\mid U;w)}{Z_{\text{block}}(G\mid U\cup \{v\};w)}-1=\sum_{\substack{B\in \mathcal{BP}(v,U,G)}} w(B) \cdot \frac{Z_{\text{block}}(G\mid U\cup V(B);w)}{Z_{\text{block}}(G\mid U\cup \{v\};w)}.
\]
By the same telescoping argument as in the proof of Theorem~\ref{thm:even zerofree} and induction we can then bound
\[
\left|\frac{Z_{\text{block}}(G\mid U;w)}{Z_{\text{block}}(G\mid U\cup \{v\};w)}-1\right|\leq \sum_{\substack{B\in \mathcal{BP}(v,U,G)}} |w(B)|\left(\frac{1}{1-a}\right)^{|V(B)-2}, 
\]
which is bounded by $a$ by the assumption in the theorem.
This concludes the sketch proof.
\end{proof}

Our result gives conditions under which the block polynomial is non-vanishing;
one can think of this result as an analogue of the conditions of Gruber-Kunz~\cites{GK,FerProc}, Koteck\'y-Preiss~\cite{KP86} and Dobrushin~\cite{Dobrushin96} for the non-vanishing of partition functions of polymer models. 
More concretely, for a multiplicative graph invariant $w$ one can define $Z_{\rm pol}(G;w)$ for a graph $G=(V,E)$ (as in~\eqref{eq:def blockpol}) by
\[
Z_{\rm pol}(G;w)=\sum_{H\subseteq E} w(H)=\sum_{H\subseteq E} \prod_{C \text{ comp. of } H} w(C),
\]
where the product runs over the components of $H$.
This can be interpreted as the partition function of a (subset) polymer model with polymers corresponding to connected subgraphs of the base graph $G$.
The Gruber-Kunz conditions state (cf.~\cite{BFP}*{Proposition 3.1}) that if the following conditions hold for each induced subgraph $H=(U,F)$ of $G$ and vertex $v\in U$,
\begin{equation}\label{eq:GK}
\sum_{\substack{B\subseteq F \text{ }v\in V(B)\\B \text{ connected}}} |w(B)|\left(\frac{1}{1-a}\right)^{|V(B)|-1}\leq a,
\end{equation}
for some $a\in (0,1)$, then $Z_{\rm pol}(G;w)\neq 0$.
Theorem~\ref{thm:block} offers two advantages over the condition in~\eqref{eq:GK} in case $w$ is $1$-multiplicative.
First of all, the exponent of $\tfrac{1}{1-a}$ is $|V(B)|-2$ rather than $|V(B)|-1$ and secondly the sum is over block paths $B$ from $v$ to $U$ rather than over all connected subgraphs $B$ that contain the vertex $v$. 
Jackson and Sokal~\cite{JacksonSokal} proved bounds for the block path generating functions in terms of the number of edges that are better than for the connected subgraph generating function.

\section{Concluding remark and questions}
\label{sec:conclusion}
We conclude with a remark and some   questions, starting with an extension to the multivariate case.

\begin{remark}\label{rem:multivariate}
Our results also extend to a multivariate version of the partition function as follows.  For a graph $G=(V,E)$, let $(b_e)_{e\in E}$ be a collection of complex numbers. Define
\[
Z_{\rm Ising}(G;(b_e)):=\sum_{\sigma:V\to \{0,1\}}\prod_{\substack{e=\{u,v\}\in E\\ \sigma(u)=\sigma(v)}}b_e.
\]
Then if for each edge $e$, $|\tfrac{b_e-1}{b_e+1}|\leq (1-\tfrac{1}{\sqrt{2(\Delta-1)}})^2/(\Delta-1)$, then $Z_{\rm Ising}(G;(b_e)) \neq 0$, provided the maximum degree of $G$ does not exceed $\Delta$.
This follows by defining $Z_{\rm even}(G;(x_e))$ in the obvious way and following our proof for the univariate version mutatis mutandis to show that $Z_{\rm even}(G;(x_e))\neq 0$ provided each $x_e$ satisfies $|x_e|\leq (1-\tfrac{1}{\sqrt{2(\Delta-1)}})^2/(\Delta-1)$. Lemma~\ref{lem:vdw} can also easily be adjusted to the multivariate setting. Combining these gives the multivariate result above.
\end{remark}

\begin{question}
We know that the zero-free disk from Theorem~\ref{thm:main} is essentially optimal in one of the real directions (under the assumption that \textsc{P $\neq$ NP}). Is it also optimal in other directions, such as the imaginary direction? This is motivated by connections to quantum computing~\cites{mann,mann2023algorithmic} as mentioned in the introduction.
\end{question}

\begin{question}
Can $(1-\tfrac{1}{\sqrt{2(\Delta-1)}})^2/(\Delta-1)$  be replaced by $1/(\Delta-1)$ in Theorem~\ref{thm:main}? Our current proof of Theorem~\ref{thm:even zerofree} does not appear to leave room for such an improvement, so some new ingredient is required.
\end{question}

\bibliographystyle{alpha}
\bibliography{ising}
\end{document}